\DeclareMathOperator{\End}{\mathrm{End}}
\DeclareMathOperator{\Aut}{\mathrm{Aut}}
\DeclareMathOperator{\Sp}{\mathrm{Sp}}
\DeclareMathOperator{\SO}{\mathrm{SO}}
\DeclareMathOperator{\GL}{\mathrm{GL}}
\DeclareMathOperator{\SL}{\mathrm{SL}}
\DeclareMathOperator{\tr}{\mathrm{tr}}
\DeclareMathOperator{\OG}{\mathrm{OG}}
\DeclareMathOperator{\IG}{\mathrm{IG}}
\DeclareMathOperator{\Fl}{\mathrm{Fl}}
\newtheorem{theorem}{Theorem}
\newtheorem{lemma}[theorem]{Lemma}
\newtheorem{proposition}[theorem]{Proposition}
\newtheorem{corollary}[theorem]{Corollary}
\theoremstyle{definition}
\newtheorem{definition}[theorem]{Definition}
\newtheorem{remark}[theorem]{Remark}
\DeclareMathOperator{\poly}{\mathrm{poly}}
\DeclareMathOperator{\Rep}{\mathrm{Rep}}
\DeclareMathOperator{\Gr}{\mathrm{Gr}}
\DeclareMathOperator{\diag}{\mathrm{diag}}
\DeclareMathOperator{\sym}{\mathrm{sym}}
 \newcommand{\C}{\mathbb{C}}
 \newcommand{\Z}{\mathbb{Z}}
\newcommand{\cl}{\mathcal{L}}
\newcommand{\frt}{\mathfrak{t}}
\begin{document}

\title{Representation ring of Levi subgroups versus cohomology ring of flag varieties II}

\author{Shrawan Kumar and Sean Rogers}
\maketitle

\begin{abstract} For any reductive group $G$ and a parabolic subgroup $P$ with its Levi subgroup $L$, the first author in [Ku2] introduced a ring homomorphism 
$ \xi^P_\lambda:  \Rep^\C_{\lambda-\poly}(L) \to H^*(G/P, \C)$, where  $ \Rep^\C_{\lambda-\poly}(L)$ is a certain subring of the complexified representation ring of $L$ (depending upon the choice of an irreducible representation $V(\lambda)$ of $G$ with highest weight $\lambda$). In this paper we study this homomorphism for $G=\Sp(2n)$ and its maximal parabolic subgroups $P_{n-k}$ for any $1\leq k\leq n$ (with the choice of $V(\lambda) $ to be the defining representation  $V(\omega_1) $ in $\mathbb{C}^{2n}$). Thus, we  obtain a $\C$-algebra homomorphism  $ \xi_{n,k}:  \Rep^\C_{\omega_1-\poly}(\Sp(2k)) \to H^*(IG(n-k, 2n), \C)$. Our main result asserts that $ \xi_{n,k}$ is injective when $n$ tends to $\infty$ keeping $k$ fixed. Similar results are obtained for the odd orthogonal groups.
\end{abstract}

\section{Introduction}

This is a follow-up of first author's work [Ku2].

Let $G$ be a connected reductive group  over $\mathbb{C}$ with a Borel subgroup $B$ and maximal torus $T\subset B$. Let $P$ be a standard parabolic subgroup with the Levi subgroup $L$ containing $T$. 
Let $V(\lambda)$ be an irreducible almost faithful representation of $G$ with highest weight $\lambda$ (i.e., the corresponding map $\rho_\lambda: G \to \Aut (V(\lambda))$ has finite kernel). Then, 
Springer defined an adjoint-equivariant regular map with Zariski dense image
$
\theta_{\lambda}:G\to \mathfrak{g}$ (depending upon $\lambda$)
 (cf. Definition \ref{def1}). Using this the first author defined in [Ku2] a certain subring  $\Rep^\C_{\lambda-\poly}(L)$ of the complexified representation ring  $\Rep^\C(L)$ (cf. Definition \ref{maindefi}). 
For $G=\GL (n)$ and $V(\lambda)$ the defining representation $\C^n$, the ring  $\Rep_{\lambda-\poly}(G):=  \Rep^\C_{\lambda-\poly}(G)\cap  \Rep (G)$
coincides with the standard notion of polynomial representation ring of $\GL (n)$ (cf. the equation \eqref{eqn1'}). 

 Coming back to the general case, the first author [Ku2] defined a 
 surjective $\C$-algebra homomorphism 
\begin{equation}\label{eqnI1} \xi^P_\lambda:  \Rep^\C_{\lambda-\poly}(L) \to H^*(G/P, \C)\end{equation}
 (cf. Theorem \ref{thmmain}). 

Specializing the above result to the case when $G=\GL (n)$, $V(\lambda)$  is the standard defining representation $\C^n$  and $P=P_r$ (for any $1\leq r \leq n-1$) is the maximal parabolic subgroup so that the flag variety 
$G/P_r$ is the Grassmannian $\Gr(r,n)$ and $L_r=\GL(r)\times \GL(n-r)$ and restricting $\xi^P_\lambda$ to the component $\GL(r)$, one recovers the  classical  ring homomorphism 
$$\phi_n: \Rep_{\poly}(\GL (r)) \to H^*(\Gr(r, n))$$
as shown in [Ku2, $\S$5].

Fix $r\geq 1$ and define the stable cohomology ring 
  $$
  \mathbb{H}^* (\Gr_r, \mathbb{Z}) := \varprojlim H^* (\Gr(r, n), \mathbb{Z})
  $$
  as the inverse limit. Then, the homomorphisms $\phi_n$ combine to give  a ring homomorphism 
$$ \phi_\infty: \Rep_{\poly}(\GL (r)) \to   \mathbb{H}^* (\Gr_r, \mathbb{Z}).$$
Moreover, by the explicit description of $\phi_n$ (cf. [F, $\S$9.4] and also [Ku2, $\S$5]) it is immediately seen that $ \phi_\infty$ is a ring {\it isomorphism}. 

{\it The aim of this paper is to analyze the corresponding question for the Symplectic groups $\Sp(2k)$ as well as the odd orthogonal groups 
$\SO(2k+1)$.}

Let us fix a positive integer $k$ and consider  the isotropic Grassmannian $\IG(n-k,2n)$ consisting of 
 $n-k$-dimensional isotropic
subspaces of $V=\mathbb{C}^{2n}$ with respect to a non-degenerate  symplectic  form.
 Then, $\IG(n-k,2n)$  is the quotient $\Sp (2n)/P_{n-k}^C$ of $\Sp (2n)$ by
 the standard maximal parabolic subgroup
$P_{n-k}^C$  corresponding to the $n-k$-th node of the Dynkin diagram of $\Sp(2n)$ (following the indexing convention as in [Bo]).
Let  $L_{n-k}^C$  denote the Levi  subgroup of $P_{n-k}^C$.
Then, 
$$L_{n-k}^C\simeq \GL (n-k)\times \Sp (2k).$$
We take the standard representation of $\Sp(2n)$ in $\mathbb{C}^{2n}$ and abbreviate the corresponding  $\Rep^\C_{\lambda-\poly}(L_{n-k}^C)$ by 
$ \Rep^\C_{\poly}(L_{n-k}^C)$. Thus, following \eqref{eqnI1}, we get a ring homomorphism 
$$   \xi^{P_{n-k}^C}:  \Rep^\C_{\poly}(L_{n-k}^C) \to H^*(\IG(n-k, 2n), \C).$$
Restricting $   \xi^{P_{n-k}^C}$ to the component $\Sp (2k)$, we get a ring homomorphism
$$   \xi_{n,k}:  \Rep^\C_{\poly}( \Sp (2k)) \to H^*(\IG(n-k, 2n), \C).$$
Define the {\it stable cohomology ring} (cf. Definition \ref{defi14})
  $$
  \mathbb{H}^* (\IG_k, \mathbb{Z}) := \varprojlim H^* (\IG(n-k, 2n), \mathbb{Z})
  $$
  as the inverse limit. Then, the homomorphisms $\xi_{n,k}$ combine to give  a ring homomorphism 
$$ \xi_k:  \Rep^\C_{\poly}( \Sp (2k)) \to    \mathbb{H}^* (\IG_k, \mathbb{Z}).$$

Following is our first main result of the paper (cf. Theorem \ref{thmsp} for a more precise assertion).

\noindent
{\bf Theorem A.}
 {\it The above ring homomorphism  $\xi_k: \Rep^{\mathbb{C}}_{\poly} (\Sp (2k)) \to \mathbb{H}^* (\IG_k, \C)$  is injective.

However, it is {\it not} surjective (cf. Remark \ref{remark1}).}

There are parallel results for the odd orthogonal groups $\SO(2k+1)$. Specifically, 
consider  the isotropic Grassmannian $\OG(n-k,2n+1)$ consisting of 
 $n-k$-dimensional isotropic
subspaces of $V=\mathbb{C}^{2n+1}$ with respect to a non-degenerate  symmetric  form.
 Then, $\OG(n-k,2n+1)$  is the quotient $\SO (2n)/P_{n-k}^B$ of $\SO (2n+1)$ by
 the standard maximal parabolic subgroup
$P_{n-k}^B$  corresponding to the $n-k$-th node of the Dynkin diagram of $\SO(2n+1)$.
Let  $L_{n-k}^B$  denote the Levi  subgroup of $P_{n-k}^B$.
Then, 
$$L_{n-k}^B \simeq \GL (n-k)\times \SO (2k+1).$$
We take the standard representation of $\SO(2n+1)$ in $\mathbb{C}^{2n+1}$ and abbreviate the corresponding  $\Rep^\C_{\lambda-\poly}(L_{n-k}^B)$ by 
$ \Rep^\C_{\poly}(L_{n-k}^B)$. Thus, following \eqref{eqnI1}, we get a ring homomorphism 
$$   \xi^{P_{n-k}^B}:  \Rep^\C_{\poly}(L_{n-k}^B) \to H^*(\OG(n-k, 2n+1), \C).$$
Restricting $   \xi^{P_{n-k}^B}$ to the component $\SO (2k+1)$, we get a ring homomorphism
$$  \bar{\xi}_{n,k}:  \Rep^\C_{\poly}( \SO (2k+1)) \to H^*(\OG(n-k, 2n+1), \C).$$
Similar to $\mathbb{H}^* (\IG_k, \mathbb{Z})$, define the {\it stable cohomology ring} (cf. Definition \ref{defi14'})
  $$
  \mathbb{H}^* (\OG_k, \mathbb{Z}) := \varprojlim H^* (\OG(n-k, 2n+1), \mathbb{Z})
  $$
  as the inverse limit. Then, the homomorphisms $\bar{\xi}_{n,k}$ combine to give  a ring homomorphism 
$$ \bar{\xi}_k:  \Rep^\C_{\poly}( \SO (2k+1)) \to    \mathbb{H}^* (\OG_k, \mathbb{Z}).$$

Following is our second main result of the paper (cf. Theorem \ref{thmsp'}  for a more precise assertion).

\noindent
{\bf Theorem B.} 
 {\it The above ring homomorphism  $\bar{\xi}_k: \Rep^{\mathbb{C}}_{\poly} (\SO (2k+1)) \to \mathbb{H}^* (\OG_k, \C)$  is injective.

However, it is {\it not} surjective (cf. Remark \ref{remark2}).}

The proofs rely on some results of Buch-Kresch-Tamvakis from [BKT1] and [BKT2] and earlier results of the first author [Ku2]. 
\vskip2ex

\noindent
{\bf Acknowledgements:} We thank L. Mihalcea for providing a simple proof of Proposition \ref{chap1-thm1.5}. This work was partially supported by the NSF grant DMS-1802328.

\section{Prelimanaries and Notation}

We recall some notation and results from [Ku2]. 

Let $G$ be a connected reductive group  over $\mathbb{C}$ with a Borel subgroup $B$ and maximal torus $T\subset B$. Let $P$ be a standard parabolic subgroup with the Levi subgroup $L$ containing $T$. We denote their Lie algebras by the corresponding Gothic characters: $\mathfrak{g}, \mathfrak{b}, \mathfrak{t},\mathfrak{p}, \mathfrak{l}$
respectively. We denote by $\Delta=\{\alpha_1, \dots, \alpha_\ell\}\subset \mathfrak{t}^*$ the set of simple roots. The fundamental weights of $\mathfrak{g}$ are denoted by  $\{\omega_1, \dots, \omega_\ell\}\subset \mathfrak{t}^*$. Let $W$ (resp. $W_L$) be the Weyl group of $G$ (resp. $L$).
Then, $W$ is generated by the simple reflections $\{s_i\}_{1\leq i \leq \ell}$. Let $W^P$ denote the set of smallest coset representatives in the cosets in $W/W_L$. {\it Throughout the paper we follow the indexing convention as in [Bo, Planche
I - IX].} 

Let $X(T)$ be the group of characters of $T$ and let $D\subset X(T)$ be the set of dominant characters  (with respect to the given choice of $B$ and hence positive roots, which are the roots of $\mathfrak{b}$). Then, the isomorphism classes of  finite dimensional irreducible representations of $G$ are bijectively parameterized by $D$ under the correspondence $\lambda \in D \leadsto V(\lambda)$, where
$V(\lambda)$ is the irreducible representation of $G$ with highest weight $\lambda$. We call $V(\lambda)$ {\it almost faithful} if the corresponding map $\rho_\lambda: G \to \Aut (V(\lambda))$ has finite kernel. 

Recall the 
Bruhat decomposition for the flag variety:
$$G/P=\sqcup_{w\in W^P}\, \Lambda_w^P,\,\,\,\text{where}\,\, \Lambda^P_w:= BwP/P.$$
Let $\bar{\Lambda}_w^P$ denote the closure of $\Lambda_w^P$ in $G/P$.  We denote by 
$[\bar{\Lambda}_w^P] \in H_{2 \ell(w)}(G/P, \mathbb{Z})$ its
fundamental class. Let $\{\epsilon^P_w\}_{w\in W^P}$ denote the Kronecker dual basis of the cohomology, i.e., 
$$\epsilon^P_w([\bar{\Lambda}_v^P])= \delta_{w,v}, \,\,\,\text{for any}\,\, v,w\in W^P.$$
Thus, $\epsilon^P_w$ belongs to the singular cohomology:
$$\epsilon^P_w\in H^{2 \ell(w)}(G/P, \mathbb{Z}).$$
We abbreviate $\epsilon^B_w$ by $\epsilon_w$. Then, for any $w\in W^P, \epsilon^P_w=\pi^*(\epsilon_w)$, where $\pi:G/B\to G/P$ is the standard projection.

 {\it We will often abbreviate $\epsilon^P_w$ by $\epsilon_w$ when the reference to $P$ is clear from the context.}

\begin{definition}\label{def1}
Let $V(\lambda)$ be any almost faithful irreducible representation of $G$. Following Springer (cf. [BR, $\S$9]), define the map
$$
\theta_{\lambda}:G\to \mathfrak{g}\quad \text{(depending upon $\lambda$)}
$$
as follows:
\[
\xymatrix{
G\ar[r]^-{\rho_{\lambda}}\ar[dr]^{\theta_{\lambda}} & \Aut (V(\lambda))\subset \End (V(\lambda))=\mathfrak{g}\oplus \mathfrak{g}^{\perp}\ar[d]^{\pi}\\
 & \mathfrak{g}
}
\]
where  $\mathfrak{g}$ sits canonically inside $\End(V(\lambda))$ via the derivative $d\rho_{\lambda}$, the orthogonal complement $\mathfrak{g}^{\perp}$
is  taken with respect to the standard conjugate $ \Aut(V(\lambda))$-invariant form on $\End (V(\lambda))$: $\langle A, B\rangle :=\tr (AB)$,  and $\pi$ is the projection to the $\mathfrak{g}$-factor. (By considering a compact form $K$ of $G$, it is easy to see that  $ \mathfrak{g} \cap \mathfrak{g}^{\perp} =\{0\}$.)

Since $\pi\circ d\rho_{\lambda}$ is the identity map, $\theta_{\lambda}$ is a local diffeomorphism at $1$ (and hence with Zariski dense image). Of course, by construction, $\theta_{\lambda}$ is an algebraic morphism. Moreover, since the decomposition $ \End (V(\lambda))=\mathfrak{g}\oplus \mathfrak{g}^{\perp}$ is $G$-stable, it is easy to see that $\theta_\lambda$ is $G$-equivariant under conjugation.
\end{definition}
We recall the following lemma from [Ku2, Lemma 2].
\begin{lemma} \label{lemma1} The above morphism restricts to ${\theta_\lambda}_{|T}: T \to \mathfrak{t}$.
\end{lemma}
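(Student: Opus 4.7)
The plan is to use the $G$-equivariance of the decomposition $\End(V(\lambda)) = \mathfrak{g} \oplus \mathfrak{g}^{\perp}$ (noted at the end of Definition \ref{def1}) together with the fact that $T$ is abelian, to conclude that $\theta_\lambda(t)$ lies in the $T$-fixed part of $\mathfrak{g}$, which is precisely $\mathfrak{t}$.

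More concretely, the first step is to fix $t \in T$ and observe that for every $s \in T$, the relation $sts^{-1} = t$ gives $\rho_\lambda(s) \rho_\lambda(t) \rho_\lambda(s)^{-1} = \rho_\lambda(t)$ inside $\End(V(\lambda))$. In other words, $\rho_\lambda(t)$ is fixed by the conjugation action of $T$ on $\End(V(\lambda))$. The second step is to recall that the decomposition $\End(V(\lambda)) = \mathfrak{g} \oplus \mathfrak{g}^{\perp}$ is preserved by $G$-conjugation, in particular by $T$-conjugation, so the projection $\pi: \End(V(\lambda)) \to \mathfrak{g}$ is $T$-equivariant (with $T$ acting on $\mathfrak{g}$ by $\Ad$). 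Therefore $\theta_\lambda(t) = \pi(\rho_\lambda(t))$ is $\Ad(T)$-fixed in $\mathfrak{g}$.

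The third step is the standard fact that the $\Ad(T)$-fixed subspace of $\mathfrak{g}$ is exactly $\mathfrak{t}$: using the root space decomposition $\mathfrak{g} = \mathfrak{t} \oplus \bigoplus_{\alpha \in R} \mathfrak{g}_\alpha$, each $\mathfrak{g}_\alpha$ is a nontrivial weight space for $T$, so $\mathfrak{g}^T = \mathfrak{t}$. Combining with the previous step, $\theta_\lambda(t) \in \mathfrak{t}$, which gives the desired restriction $\theta_\lambda|_T : T \to \mathfrak{t}$.

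There is no substantial obstacle here; the only thing worth double-checking is that the $T$-equivariance of $\pi$ is genuinely given by the setup of Definition \ref{def1}. This follows immediately because the trace form $\langle A,B\rangle = \tr(AB)$ is $\Aut(V(\lambda))$-invariant (hence $G$-invariant through $\rho_\lambda$), so the orthogonal complement $\mathfrak{g}^{\perp}$ of the $G$-stable subspace $\mathfrak{g}$ is $G$-stable as well, making the direct sum decomposition and its projection $G$-equivariant.
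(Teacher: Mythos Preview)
Your argument is correct. The paper itself does not supply a proof of this lemma; it merely cites [Ku2, Lemma~2]. Your approach---using the $G$-equivariance of $\theta_\lambda$ (already recorded at the end of Definition~\ref{def1}) together with the commutativity of $T$ to force $\theta_\lambda(t)\in\mathfrak{g}^{\Ad(T)}=\mathfrak{t}$---is the standard one and is exactly what one expects the cited proof to be. One minor remark: once the $G$-equivariance of $\theta_\lambda$ is in hand, you can bypass the separate discussion of $\rho_\lambda(t)$ and $\pi$ and simply write $\Ad(s)\theta_\lambda(t)=\theta_\lambda(sts^{-1})=\theta_\lambda(t)$ for $s\in T$; but what you wrote is of course equivalent.
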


 For any $\mu \in X(T)$, we have a  $G$-equivariant
 line bundle $\cl (\mu)$  on $G/B$ associated to the principal $B$-bundle $G\to G/B$
via the one dimensional $B$-module $\mu^{-1}$. (Any  $\mu \in X(T)$ extends
 uniquely to a character of $B$.) The one dimensional $B$-module $\mu$ is also denoted by
 $\mathbb{C}_\mu$. Recall the surjective  Borel homomorphism
 $$\beta : S(\mathfrak{t}^*) \to H^*(G/B, \mathbb{C}),$$
which takes a character $ \mu \in X(T)$ to the first Chern class of the  line bundle $\cl(\mu)$. (We realize 
$X(T)$ as a lattice in $\mathfrak{t}^*$ via taking derivative.) We then extend this map linearly over $\mathbb{C}$ to $\mathfrak{t}^*$ and extend further as a graded algebra homomorphism from $ S(\mathfrak{t}^*)$ (doubling the degree). Under the Borel homomorphism,
\begin{equation}\label{eqnborel} \beta(\omega_i)=\epsilon_{s_i},\,\,\,\text{for any fundamental weight}\,\, \omega_i.
\end{equation}

Fix a compact form $K$ of $G$. In particular,  $T_o:=K\cap T$ is a (compact) maximal torus of $K$. Then, $W\simeq N(T_o)/T_o$, where $N(T_o)$ is the normalizer of $T_o$ in $K$. Recall that $\beta$ is $W$-equivariant under  the standard action of $W$ on  $S(\mathfrak{t}^*)$ and the $W$-action on $H^*(G/B, \mathbb{C})$ induced from the $W$-action on $G/B\simeq K/T_o$ via 
$$(nT_o)\cdot (kT_o):= kn^{-1}T_o,\,\,\,\text{for}\,\, n\in N(T_o)\,\,\,\text{and}\,\,  k\in K.$$
Thus, for any standard parabolic subgroup $P$ with the Levi subgroup $L$ containing $T$, restricting $\beta$, we get a surjective graded algebra homorphism:
$$\beta^P : S(\mathfrak{t}^*)^{W_L} \to H^*(G/B, \mathbb{C})^{W_L}\simeq  H^*(G/P, \mathbb{C}),$$
where the last isomorphism, which is induced from the projection $G/B \to G/P$,  can be found, e.g.,  in [Ku1, Corollary 11.3.14]. 

Now, the Springer morphism ${\theta_\lambda}_{|T}:T\to \mathfrak{t}$ (restricted to $T$) gives rise to the corresponding $W$-equivariant injective  algebra homomorphism 
on the affine coordinate rings:
$$ ({\theta_\lambda}_{|T})^*: \mathbb{C}[\mathfrak{t}]=S(\mathfrak{t}^*)\to  \mathbb{C}[T].$$
Thus,  on restriction to $W_L$-invariants, we get an injective algebra homomorphism 
$$ {\theta_\lambda (P)}^*: \mathbb{C}[\mathfrak{t}]^{W_L}=S(\mathfrak{t}^*)^{W_L}\to  \mathbb{C}[T]^{W_L}.$$
(Since $W_L$-invariants depend upon the choice of the parabolic subgroup $P$, we have included $P$ in the notation of $ {\theta_\lambda (P)}^*$.) Now, let $\Rep (L)$ be the representation ring of $L$ and let  $\Rep^\mathbb{C} (L):= \Rep (L)\otimes_{\mathbb{Z}}\,\mathbb{C}$ be its complexification.  Then, as it is well known,   
\begin{equation} \label{eqn1}\Rep^\mathbb{C} (L)\simeq  \mathbb{C}[T]^{W_L}
\end{equation}
obtained from taking the character of an $L$-module restricted to $T$. 

{\it We will often identify a virtual representation of $L$ with its character restricted to $T$ (which is automatically $W_L$-invariant).}
\begin{definition} \label{maindefi} We  call a virtual character $\chi\in  \Rep^\mathbb{C} (L)$ of $L$ a {\it $\lambda$-polynomial character} if the corresponding function in 
$ \mathbb{C}[T]^{W_L}$ is in the image of $ {\theta_\lambda (P)}^*$. The set of all $\lambda$-polynomial characters of $L$, which is, by definition,  a subalgebra of $\Rep^\mathbb{C} (L)$ isomorphic to the algebra $S(\mathfrak{t}^*)^{W_L}$, is denoted by $\Rep^\mathbb{C}_{\lambda-\poly} (L)$. Of course, the map $ {\theta_\lambda (P)}^*$ induces an algebra isomorphism (still denoted by)
$$ {\theta_\lambda (P)}^*: S(\mathfrak{t}^*)^{W_L}\simeq \Rep^\mathbb{C}_{\lambda-\poly} (L),$$
under the identification \eqref{eqn1}.
\end{definition}

It is easy to see that 
\begin{equation} \label{eqn1'} \Rep_{\omega_1-\poly}(\GL (n))= \Rep_{\poly}(\GL (n)),
\end{equation}
where  $ \Rep_{\poly}(\GL (n))$ denotes the subring of the representation ring $\Rep(\GL (n))$ spanned by the irreducible polynomial representations of $\GL (n)$. 

We recall  the following  result from [Ku2, Theorem 5].

\begin{theorem} \label{thmmain}  Let $V(\lambda)$ be an almost faithful irreducible $G$-module and let $P$ be any standard parabolic subgroup. Then,  
the above maps (specifically $\beta^P\circ ({\theta_\lambda (P)}^*)^{-1} $) give
 rise to  a surjective $\mathbb{C}$-algebra homomorphism 
$$\xi_\lambda^P: \Rep^\mathbb{C}_{\lambda-\poly}(L)  \to H^*(G/P, \C).$$

Moreover, let $Q$ be another standard  parabolic subgroup with Levi subgroup $R$ containing $T$ such that $P\subset Q$ (and hence 
$L\subset R$). Then, we have the following commutative diagram:\[
\xymatrix{
\Rep^\mathbb{C}_{\lambda-\poly}(R)  \ar[d]^{\gamma}\ar[r]^{\xi_\lambda^Q} & H^*(G/Q, \C)
\ar[d]^{\pi^*}\\
\Rep^\mathbb{C}_{\lambda-\poly}(L)\ar[r]^{\xi_\lambda^P} &H^*(G/P, \C),
}
\]
where $\pi^*$ is induced from the standard projection $\pi:G/P \to G/Q$ and $\gamma$ is induced from the restriction of representations. 
\end{theorem}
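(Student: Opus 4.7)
The plan is to establish this essentially by a definition chase, assembling the ingredients already laid out in the preliminaries. For the first assertion, the algebra isomorphism ${\theta_\lambda(P)}^*: S(\mathfrak{t}^*)^{W_L} \xrightarrow{\sim} \Rep^\C_{\lambda-\poly}(L)$ from Definition \ref{maindefi} and the surjective graded algebra homomorphism $\beta^P: S(\mathfrak{t}^*)^{W_L} \to H^*(G/P, \C)$ are already in hand; the composition $\xi^P_\lambda := \beta^P \circ ({\theta_\lambda(P)}^*)^{-1}$ is therefore automatically a well-defined surjective $\C$-algebra homomorphism.

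For the commutativity of the diagram I would decompose the check into two halves. On the representation-theoretic side, since $L \subset R$ gives $W_L \subset W_R$, we have inclusions $S(\mathfrak{t}^*)^{W_R} \subset S(\mathfrak{t}^*)^{W_L}$ and $\C[T]^{W_R} \subset \C[T]^{W_L}$. Both ${\theta_\lambda(Q)}^*$ and ${\theta_\lambda(P)}^*$ are restrictions of the single map $({\theta_\lambda}_{|T})^*: \C[\mathfrak{t}] \to \C[T]$, and under the identification \eqref{eqn1} the restriction-of-representations map $\gamma$ corresponds to the inclusion $\C[T]^{W_R} \hookrightarrow \C[T]^{W_L}$. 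Chasing an element $f \in S(\mathfrak{t}^*)^{W_R}$ then gives $\gamma \circ {\theta_\lambda(Q)}^*(f) = {\theta_\lambda(P)}^*(f)$, i.e.\ $({\theta_\lambda(P)}^*)^{-1} \circ \gamma$ equals $({\theta_\lambda(Q)}^*)^{-1}$ followed by the natural inclusion into $S(\mathfrak{t}^*)^{W_L}$.

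On the cohomology side, $\beta^P$ and $\beta^Q$ are both restrictions of the single Borel homomorphism $\beta: S(\mathfrak{t}^*) \to H^*(G/B, \C)$, and under the isomorphisms $H^*(G/P, \C) \simeq H^*(G/B, \C)^{W_L}$ and $H^*(G/Q, \C) \simeq H^*(G/B, \C)^{W_R}$ from [Ku1, Corollary 11.3.14] the pullback $\pi^*: H^*(G/Q, \C) \to H^*(G/P, \C)$ becomes the obvious inclusion on $W_R$-invariants. Hence $\pi^* \circ \beta^Q = \beta^P$ on $S(\mathfrak{t}^*)^{W_R}$, and combining with the previous half yields $\pi^* \circ \xi^Q_\lambda = \xi^P_\lambda \circ \gamma$. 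The real work has already been packed into the preparatory material — specifically Lemma \ref{lemma1} (ensuring ${\theta_\lambda(P)}^*$ lands in the correct place), the Zariski density of $\theta_\lambda$ (which yields injectivity of the pullback), and the $W$-equivariance and surjectivity of the classical Borel homomorphism — so no serious obstacle remains at the level of the theorem itself; the only subtlety worth double-checking is that the implicit identifications of $H^*(G/P, \C)$ with $W_L$-invariants in $H^*(G/B, \C)$ intertwine the $\pi^*$ in the diagram with the inclusion of invariants, which is standard.
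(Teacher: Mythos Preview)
Your argument is correct: the theorem is indeed a formal assembly of the ingredients already spelled out in the preliminaries (the algebra isomorphism of Definition~\ref{maindefi}, the surjectivity and $W$-equivariance of the Borel map $\beta$, and the identification $H^*(G/P,\C)\simeq H^*(G/B,\C)^{W_L}$), and your two-half check of the commutative square is exactly the right bookkeeping. Note, however, that the present paper does not supply its own proof of this statement at all---it is quoted verbatim from [Ku2, Theorem~5] as background---so there is nothing in this paper to compare your argument against; what you have written is essentially the routine verification one expects the source paper to contain.
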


\section{Injectivity Result for the Symplectic Group}

In this section, we consider the  symplectic group $G=\Sp (2n)$ ($n\geq 2$). 
We take the Springer morphism for  $\Sp (2n)$ with respect to the first fundamental weight $\lambda= \omega_1.$ We will abbreviate the Springer morphism $\theta_{\omega_1}$ by $\theta$, $\xi_\lambda^P$ by $\xi^P$  and $\Rep^\C_{\omega_1-\poly}(G)$ by $\Rep^\C_{\poly}(G)$. 

Let $V=\mathbb{C}^{2n}$ be equipped with the
nondegenerate symplectic form $\langle \,,\,\rangle$ so that its matrix
$\bigl(\langle e_i,e_j\rangle\bigr)_{1\leq i,j \leq 2n}$ in the
standard basis $\{e_1,\dots, e_{2n}\}$ is given by
\begin{equation*}
E_C=\left(\begin{array}{cc}
0&J\\
-J&0
\end{array}\right),
\end{equation*}
where $J$ is the anti-diagonal matrix $(1,\dots,1)$ of size $n$. Let
$$\Sp(2n):=\{g\in \SL(2n):
g \,\text{leaves  the form}\, \langle \,,\,\rangle \,\text{invariant}\}$$ be the associated
symplectic group.  Clearly, $\Sp(2n)$ can be realized
as the fixed point subgroup $\SL(2n)^\sigma$ under the involution $\sigma:\SL(2n)\to \SL(2n)$
defined by $\sigma(A)=E_C(A^t)^{-1}E_C^{-1}$. 

The involution $\sigma$
keeps both of $B$ and $T$ stable, where $B$ and $T$  are the standard Borel and  maximal torus respectively of $\SL(2n)$. Moreover,
$B^\sigma$ (respectively, $T^\sigma$) is a Borel subgroup (respectively, a maximal torus)
of $\Sp(2n)$. We denote $B^\sigma, T^\sigma$ by $B_C=B_{C_n},T_C=T_{C_n}$
respectively. Then, $T_C$ is given as follows:
\begin{equation}\label{eqnnew201}
T_{C} = \left\{{\bf t}=
\diag \bigl(t_{1},  \dots, t_n, t_n^{-1}, \dots, 
 t^{-1}_{1}\bigr):t_{i}\in \mathbb{C}^{*}\right\}.
\end{equation}
Its Lie algebra is given  by 
\begin{equation}\label{eq201}
\frt_{C} =\left\{\dot{\bf t}=
\diag \bigl(x_{1},  \dots, x_n, -x_n, \dots, 
 -x_{1}\bigr):x_{i}\in \mathbb{C}\right\}.
\end{equation}
We recall the following lemma from [Ku2, Lemma 10].
\begin{lemma}\label{lem2}
The Springer morphism $\theta:G\to \mathfrak{g}$\, for  $G=\Sp (2n)$ is given by 
$$
g\mapsto \frac{g-E_C^{-1}g^{t}E_C}{2},\,\,\,\text{for}\,\, g\in G.
$$
(Observe that this is the Cayley transform.)
\end{lemma}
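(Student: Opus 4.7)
The plan is to make the orthogonal decomposition $\End(V(\omega_1)) = \mathfrak{g}\oplus \mathfrak{g}^\perp$ explicit in this situation, where $V(\omega_1)=\mathbb{C}^{2n}$ and $\mathfrak{g}=\mathfrak{sp}(2n)\subset \mathfrak{gl}(2n)=\End(\mathbb{C}^{2n})$, and then read off the projection $\pi$ from that decomposition.

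First I would introduce the involution $\tau: \mathfrak{gl}(2n) \to \mathfrak{gl}(2n)$ defined by $\tau(X) := -E_C^{-1}X^{t}E_C$. Differentiating the defining equation of $\Sp(2n)$ (equivalently, using that $E_C$ is antisymmetric and that $X\in\mathfrak{sp}(2n)$ iff $X^tE_C + E_C X=0$), one checks directly that $\mathfrak{sp}(2n)$ is exactly the $(+1)$-eigenspace of $\tau$. Denote its $(-1)$-eigenspace by $\mathfrak{m}$. Since $\tau$ is a linear involution we have the eigenspace decomposition $\mathfrak{gl}(2n)=\mathfrak{sp}(2n)\oplus\mathfrak{m}$.

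Next I would verify that this decomposition is orthogonal with respect to the trace form $\langle A,B\rangle=\tr(AB)$. For this it suffices to show $\tau$ preserves the trace form, for then any $X$ in the $(+1)$-eigenspace and $Y$ in the $(-1)$-eigenspace satisfy $\langle X,Y\rangle = \langle \tau(X),\tau(Y)\rangle = -\langle X,Y\rangle$, forcing $\langle X,Y\rangle=0$. The invariance is a one-line computation:
\[
\tr\bigl(\tau(X)\tau(Y)\bigr) = \tr\bigl(E_C^{-1}X^{t}Y^{t}E_C\bigr) = \tr(X^tY^t) = \tr\bigl((YX)^t\bigr) = \tr(XY),
\]
using cyclicity of trace. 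Hence $\mathfrak{m}=\mathfrak{sp}(2n)^\perp$, and the projection $\pi: \mathfrak{gl}(2n)\to \mathfrak{sp}(2n)$ onto the $(+1)$-eigenspace of $\tau$ is the standard averaging $\pi(A)=\tfrac{1}{2}\bigl(A+\tau(A)\bigr)=\tfrac{1}{2}\bigl(A-E_C^{-1}A^{t}E_C\bigr)$. Specializing to $A=\rho_{\omega_1}(g)=g$ for $g\in\Sp(2n)$ gives the asserted formula for $\theta(g)$.

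The parenthetical remark about the Cayley transform is then immediate: the condition $g^{t}E_Cg=E_C$ defining $\Sp(2n)$ rearranges to $E_C^{-1}g^{t}E_C=g^{-1}$, so on $\Sp(2n)$ the formula reads $\theta(g)=\tfrac{1}{2}(g-g^{-1})$. There is really no serious obstacle here; the only point that requires care is making sure the involution $\tau$ used to cut out $\mathfrak{sp}(2n)$ is genuinely the trace-form reflection across $\mathfrak{sp}(2n)$, which is settled by the short trace computation above.
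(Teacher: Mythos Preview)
Your argument is correct. The key points---that $\tau(X)=-E_C^{-1}X^{t}E_C$ is a linear involution of $\mathfrak{gl}(2n)$ with $(+1)$-eigenspace $\mathfrak{sp}(2n)$, and that $\tau$ preserves the trace form so that the eigenspace decomposition coincides with the orthogonal decomposition $\mathfrak{g}\oplus\mathfrak{g}^\perp$---are all verified cleanly. The only small thing you use without stating a proof is that $\tau$ is actually an involution; this follows from the antisymmetry of $E_C$ (so $E_C^t=-E_C$ and $(E_C^{-1})^t=-E_C^{-1}$), and you do flag that antisymmetry, so this is fine.

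As for comparison with the paper: the paper does not prove this lemma at all. It is simply quoted from [Ku2, Lemma 10], so your self-contained argument supplies strictly more than the present paper does. Your approach is in fact the natural one and is presumably what underlies the cited result.
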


From the description of the Springer morphism given above, we immediately get the following (cf. [Ku2, Corollary 11]):

\begin{corollary}\label{coro3}
Restricted to the maximal torus as above, we get the following description of the Springer map $\theta$:
$$\theta({\bf t})=
\diag \bigl(\bar{t}_1
, \dots, 
  \bar{t}_n, 
  -\bar{t}_n, \dots, 
  -\bar{t}_1\bigr),\,\,\,\text{where $\bar{t}_i :=\frac{t_{i}- t^{-1}_{i}}{2}$}.$$
\end{corollary}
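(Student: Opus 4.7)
The statement is a direct computation starting from the closed-form expression for the Springer morphism given in Lemma \ref{lem2}, namely $\theta(g)=\tfrac{1}{2}(g-E_C^{-1}g^t E_C)$. My plan is therefore to substitute ${\bf t}\in T_C$ and simplify.

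First, since ${\bf t}$ is diagonal one has ${\bf t}^t={\bf t}$, so it suffices to compute $E_C^{-1}{\bf t}E_C$. Write ${\bf t}$ in block form as $\left(\begin{smallmatrix} D & 0\\ 0 & D'\end{smallmatrix}\right)$ with $D=\diag(t_1,\dots,t_n)$ and $D'=\diag(t_n^{-1},\dots,t_1^{-1})$. Using $J^2=I_n$ one immediately verifies $E_C^2=-I_{2n}$, hence $E_C^{-1}=-E_C=\left(\begin{smallmatrix} 0 & -J \\ J & 0\end{smallmatrix}\right)$.

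Next, I would carry out the $2\times 2$ block matrix multiplication
\[
E_C^{-1}{\bf t}E_C=\begin{pmatrix} 0 & -J \\ J & 0\end{pmatrix}\begin{pmatrix} D & 0\\ 0 & D'\end{pmatrix}\begin{pmatrix} 0 & J \\ -J & 0\end{pmatrix}=\begin{pmatrix} JD'J & 0\\ 0 & JDJ\end{pmatrix}.
\]
Conjugation by $J$ reverses the order of the diagonal entries, so $JDJ=\diag(t_n,\dots,t_1)$ and $JD'J=\diag(t_1^{-1},\dots,t_n^{-1})$. Consequently
\[
E_C^{-1}{\bf t}E_C=\diag(t_1^{-1},\dots,t_n^{-1},t_n,\dots,t_1).
\]

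Finally, I would subtract this from ${\bf t}=\diag(t_1,\dots,t_n,t_n^{-1},\dots,t_1^{-1})$ and halve the result to obtain
\[
\theta({\bf t})=\tfrac{1}{2}\diag(t_1-t_1^{-1},\dots,t_n-t_n^{-1},\,t_n^{-1}-t_n,\dots,t_1^{-1}-t_1)=\diag(\bar t_1,\dots,\bar t_n,-\bar t_n,\dots,-\bar t_1),
\]
which is the stated formula. The only possible obstacle is bookkeeping: keeping track of the reversing action of $J$ on the diagonal entries and matching the resulting block with the presentation of $\mathfrak{t}_C$ in \eqref{eq201}; but this is purely mechanical and poses no genuine difficulty.
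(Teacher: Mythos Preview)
Your computation is correct and is exactly the direct verification the paper has in mind: the corollary is stated as an immediate consequence of Lemma~\ref{lem2} (with a reference to [Ku2, Corollary~11]) and no separate proof is given. Your block-matrix bookkeeping with $E_C^{-1}=-E_C$ and the reversing action of $J$ is precisely the mechanical check that yields $E_C^{-1}{\bf t}E_C={\bf t}^{-1}$, hence $\theta({\bf t})=\tfrac{1}{2}({\bf t}-{\bf t}^{-1})$.
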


The following result follows easily from Corollary~\ref{coro3} together with the description of the Weyl group (cf. [Ku2, Proposition 12]).

\begin{proposition}\label{prop5}
Let   $f:T\to \mathbb{C}$  be a regular map. Then, $f\in \Rep_{\poly}^\C(G)$  if and only if the following is satisfied:

There exists a {\it symmetric} polynomial $P_f(x_{1},\ldots,x_{n})$ such that 
\begin{equation*}
f({\bf t})=P_f\left((\bar{t}_1)^2,\dots,(\bar{t}_n)^2\right), \,\,\text{for}\,\,{\bf t} \in T_C \text{ given by}\, \eqref{eqnnew201}.
\end{equation*}
\end{proposition}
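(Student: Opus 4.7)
The plan is to simply unwind the definition of $\Rep_{\poly}^\C(G)$ in this case (where the Levi equals the whole group $G = \Sp(2n)$, so $W_L = W$) and match it up with the explicit description of the Springer morphism on $T_C$ given in Corollary \ref{coro3}. By Definition \ref{maindefi}, $f\in \Rep_{\poly}^\C(G)$ iff, under the identification \eqref{eqn1}, $f$ lies in the image of the pullback ${\theta(P)}^*:S(\frt_C^*)^{W}\to \C[T_C]^{W}$. So the task splits into two independent computations: describing the source $S(\frt_C^*)^{W}$ and describing what ${\theta(P)}^*$ does to its elements.

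For the first, I would invoke the standard fact that the Weyl group $W$ of type $C_n$ is the hyperoctahedral group acting on $\frt_C$ in the coordinates $(x_1,\dots,x_n)$ of \eqref{eq201} by signed permutations. Consequently $S(\frt_C^*)^W=\C[x_1,\dots,x_n]^W$ is precisely the subring of symmetric polynomials in the squares $x_1^2,\dots,x_n^2$. For the second, Corollary \ref{coro3} says that $\theta(\mathbf{t})=\diag(\bar t_1,\dots,\bar t_n,-\bar t_n,\dots,-\bar t_1)$, so under the pullback the coordinate function $x_i$ on $\frt_C$ goes to the regular function $\bar t_i = (t_i-t_i^{-1})/2$ on $T_C$.

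Combining these, an element of $S(\frt_C^*)^W$ has the form $P(x_1^2,\dots,x_n^2)$ for some symmetric polynomial $P$, and its image under ${\theta(P)}^*$ is $P(\bar t_1^{\,2},\dots,\bar t_n^{\,2})$. This gives both directions at once: $f\in\Rep_{\poly}^\C(G)$ is equivalent to $f(\mathbf{t})=P_f(\bar t_1^{\,2},\dots,\bar t_n^{\,2})$ for some symmetric $P_f$. (The ``if'' direction uses, in addition, that the pullback is injective, which is already noted in the paragraph preceding Definition \ref{maindefi}, so a given $f$ determines $P_f$ uniquely.)

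I do not expect a genuine obstacle. The only thing worth a sanity check is that the functions $P(\bar t_1^{\,2},\dots,\bar t_n^{\,2})$ are automatically $W$-invariant on $T_C$, as they must be to lie in $\C[T_C]^W$: this is immediate because $W$ acts on $T_C$ by permutations of the $t_i$ together with the involutions $t_i\mapsto t_i^{-1}$, and the latter send $\bar t_i$ to $-\bar t_i$ and hence preserve $\bar t_i^{\,2}$. Thus the proof is essentially a translation of the type $C_n$ invariant theory through the Cayley-transform formula of Lemma \ref{lem2}.
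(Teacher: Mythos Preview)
Your argument is correct and is exactly what the paper has in mind: it simply records that the result ``follows easily from Corollary~\ref{coro3} together with the description of the Weyl group'' (citing [Ku2, Proposition~12]), and your write-up spells out precisely that computation. There is nothing to add.
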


We recall the following result from [Ku2, Proposition 24].
\begin{lemma}\label{lastpropo} Under the homomorphism $\xi^B:\Rep_{\poly}^\C(T) \to H^*(G/B, \C)$ of Theorem \ref{thmmain}, 
$$\bar{t}_i \mapsto  (\epsilon_{s_i}- \epsilon_{s_{i-1}}), \,\,\,\text{for any}\,\, 1\leq i\leq n .$$
\end{lemma}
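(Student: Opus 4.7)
The strategy is to trace $\bar{t}_i$ through the composition $\xi^B = \beta \circ (\theta(B)^*)^{-1}$ of Theorem~\ref{thmmain}, in two steps.

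\emph{Step 1: Preimage under $\theta^*$.} By Corollary~\ref{coro3}, the restriction of $\theta$ to $T_C$ is $\mathbf{t}\mapsto \diag(\bar{t}_1,\dots,\bar{t}_n,-\bar{t}_n,\dots,-\bar{t}_1)$. Letting $x_1,\dots,x_n$ denote the linear coordinates on $\mathfrak{t}_C$ from equation~\eqref{eq201}, this reads off immediately as $\theta^*(x_i) = \bar{t}_i$. Hence the function $\bar{t}_i\in\mathbb{C}[T_C]$ lies in $\Rep_{\poly}^\C(T)$ and its preimage is $(\theta^*)^{-1}(\bar{t}_i) = x_i \in S(\mathfrak{t}_C^*)$.

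\emph{Step 2: Apply the Borel homomorphism to $x_i$.} Via the derivative, $X(T_C)$ sits inside $\mathfrak{t}_C^*$, and the coordinate $x_i$ corresponds to the character $\mathbf{t}\mapsto t_i$, which in the type $C_n$ convention of [Bo, Planche III] is the $i$-th coordinate weight. The fundamental weights are then $\omega_j = x_1+\cdots+x_j$ (a direct check against the simple coroots $\alpha_j^\vee = x_j - x_{j+1}$ for $j<n$ and $\alpha_n^\vee = x_n$), so
\[
x_i \;=\; \omega_i - \omega_{i-1}, \qquad \omega_0 := 0.
\]
Combining this with equation~\eqref{eqnborel}, which says $\beta(\omega_j) = \epsilon_{s_j}$, yields
\[
\xi^B(\bar{t}_i) \;=\; \beta(x_i) \;=\; \beta(\omega_i) - \beta(\omega_{i-1}) \;=\; \epsilon_{s_i} - \epsilon_{s_{i-1}},
\]
under the convention $\epsilon_{s_0} := 0$ (consistent with $\omega_0 = 0$), as required.

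The argument is essentially a direct unwinding of definitions; there is no serious obstacle. The only nontrivial bookkeeping is the identification $x_i = \omega_i - \omega_{i-1}$, which converts the coordinate weight (the natural basis of $\mathfrak{t}_C^*$ in which $\theta$ is diagonal) into the fundamental-weight basis underlying the Borel presentation of $H^*(G/B,\mathbb{C})$.
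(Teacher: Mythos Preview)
Your proof is correct. The paper does not actually prove this lemma here; it simply recalls it from [Ku2, Proposition~24], so there is no in-paper argument to compare against. Your direct unwinding of $\xi^B=\beta\circ(\theta^*)^{-1}$ via Corollary~\ref{coro3} and the type~$C_n$ identity $\omega_i=x_1+\cdots+x_i$ (hence $x_i=\omega_i-\omega_{i-1}$), combined with~\eqref{eqnborel}, is exactly the expected computation and supplies what the paper omits.
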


\begin{definition}\label{defi9} For  $1\leq r \leq n$, we let $\IG(r,2n)$ to be
the set of $r$-dimensional isotropic
subspaces of $V$ with respect to the form $\langle\,,\,\rangle$, i.e.,
$$\IG(r,2n):=\{M\in \Gr(r,2n): \langle v,v'\rangle=0,\ \forall\,  v,v'\in M\}.$$
 Then, $\IG(r,2n)$  is the quotient $\Sp (2n)/P_r^C$ of $\Sp (2n)$ by
 the standard maximal parabolic subgroup
$P_r^C$  with $\Delta \setminus \{\alpha_r\}$ as the set of simple roots of its Levi component
$L_r^C$. (Again we take $L_r^C$ to be the unique Levi subgroup of $P_r^C$
 containing $T_C$.) 
Then, 
$$L_r^C\simeq \GL (r)\times \Sp (2(n-r)).$$
In this case,  by the identity \eqref{eqn1'}, Corollary \ref{coro3} and Proposition \ref{prop5}, 
 \begin{equation} \label{eqn201}\Rep^\mathbb{C}_{\poly}(L_r^C) \simeq \C_{\sym}[\bar{t}_1,  \dots ,  \bar{t}_r]\otimes_\C \C_{\sym}[(\bar{t}_{r+1})^2,  \dots ,  
(\bar{t}_n)^2],
\end{equation}
where $ \C_{\sym}$ denotes the subalgebra of the polynomial ring consisting of symmetric polynomials. 
\end{definition}

{\it From now on we fix $k\geq 0$ and consider  $\IG(n-k,2n)$.}

Following [BKT1, Definition 1.1],   a partition $\lambda: \lambda_1 \geq \lambda_2\geq \dots\geq \lambda_d>0$ is said to be $k$-{\it strict} if no part greater than $k$ is repeated (i.e., $\lambda_j > k \Rightarrow \lambda_{j+1} < \lambda_j$). The Schubert varieties in  $\IG(n-k,2n)$ are parametrized by 
 $k$-strict partitions contained in the $(n-k) \times (n+k)$ rectangle. The codimension of this variety is equal to $|\lambda|:= \sum\lambda_i$. Let $\sigma_\lambda
\in H^{2 |\lambda|}(\IG(n-k,2n), \Z)$ denote the cohomology class Poincar\'e dual to the fundamental class $[X_\lambda]$ of the Schubert variety associated to $\lambda$. Let $\mathcal{P}(k,n)$ denote the set of  $k$-strict partitions contained in the $(n-k) \times (n+k)$ rectangle. Thus, $\{\sigma_\lambda\}_{\lambda \in \mathcal{P}(k,n)}$ gives the Schubert basis of $ H^*(\IG(n-k,2n), \Z)$.

We have  the following short exact sequence of vector bundles over $\IG(n-k,2n)$:
$$
0 \to \mathcal{S} \to \bar{\mathcal{E}} \to \mathcal{Q} \to 0,
$$
where $ \bar{\mathcal{E}}$ is the trivial bundle of rank $2n$, $\mathcal{S}$ is the tautological subbundle of rank $n-k$ and $\mathcal{Q}$ is the  quotient bundle of rank $n+k$. Let  $c_i=c_i(\mathcal{Q})$ ($1\leq i\leq n+k$) denote  the $i^{th}$ Chern class of the quotient bundle $\mathcal{Q}$. Then, these classes are so called  the {\it special Schubert classes}. Then, by [BKT1, $\S$1.2],
\begin{equation} \label{eqn202} c_i=\sigma_{i},
\end{equation}
where $\sigma_i:=\sigma_{(i)}$ and $(i)$ is the partition with single term $i$. 

We have the following presentation of the cohomology ring due to [BKT1, Theorem 1.2]. In the following  we follow the convention that  $c_0 =1$ and $c_p =0$ if $p < 0$ or $p > n+k$.

\begin{theorem}\label{chap1-thm1.3}
  The cohomology ring $H^* (\IG(n-k, 2n), \mathbb{Z})$ is presented as a  quotient of the ploynomial ring $\mathbb{Z}[c_{1}, \ldots, c_{n+k}]$ modulo the relations:
  $$
  (R^p_{n,k})\,\, (n-k+1\leq p \leq n+k):\,\,\,\,\,\qquad\qquad \det (c_{1+ j -i})_{1 \le i , j \le p} =0,
  $$
  and
  $$
   (S^s_{n,k}) \,\,(k+1\leq s \leq n):\,\,\,\,\, \qquad\qquad c_s^2 + 2\sum^{n+k-s}_{i=1} (-1)^i c_{s+i} c_{s-i}=0.
  $$
\end{theorem}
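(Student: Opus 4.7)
The plan is to proceed in two stages: first verify that the listed relations hold in $H^{*}(\IG(n-k,2n),\Z)$, and then show that these relations suffice, by matching $\Z$-ranks.

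For the first stage I would work with the tautological sequence $0\to\mathcal{S}\to\bar{\mathcal{E}}\to\mathcal{Q}\to 0$ together with the isotropic filtration $\mathcal{S}\subset\mathcal{S}^{\perp}\subset\bar{\mathcal{E}}$ furnished by the symplectic form. The identity $c(\mathcal{S})\,c(\mathcal{Q})=1$ gives $c_{p}(\mathcal{S})=(-1)^{p}h_{p}$, where $h_{p}$ denotes the complete symmetric function in the Chern roots of $\mathcal{Q}$. Since $\mathcal{S}$ has rank $n-k$, these classes vanish for $p>n-k$, and the Jacobi--Trudi identity $h_{p}=\det(c_{1+j-i})_{1\le i,j\le p}$ converts the vanishing directly into the relations $R^{p}_{n,k}$. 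For the relations $S^{s}_{n,k}$, the symplectic form induces $\bar{\mathcal{E}}\simeq\bar{\mathcal{E}}^{*}$ with $\mathcal{Q}^{*}\simeq\mathcal{S}^{\perp}$ as subbundles, whence $c(\mathcal{Q})(-t)=c(\mathcal{S})(t)\,c(\mathcal{S}^{\perp}/\mathcal{S})(t)$. Multiplying by $c(\mathcal{Q})(t)$ and using $c(\mathcal{S})\,c(\mathcal{Q})=1$ yields the clean polynomial identity
\[
c(\mathcal{Q})(t)\,c(\mathcal{Q})(-t)\;=\;c(\mathcal{S}^{\perp}/\mathcal{S})(t).
\]
Because $\mathcal{S}^{\perp}/\mathcal{S}$ is a symplectic bundle of rank $2k$, its Chern classes vanish above degree $2k$; extracting the coefficient of $t^{2s}$ for $k+1\le s\le n$ recovers the left-hand side of $S^{s}_{n,k}$ up to the overall sign $(-1)^{s}$.

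For the second stage, form the quotient ring $A:=\Z[c_{1},\ldots,c_{n+k}]/(R^{p}_{n,k},\, S^{s}_{n,k})$. The first stage produces a well-defined ring homomorphism $\phi\colon A\to H^{*}(\IG(n-k,2n),\Z)$ sending $c_{i}$ to $\sigma_{i}$ (using \eqref{eqn202}). Surjectivity of $\phi$ follows from a Giambelli-type formula expressing each Schubert class $\sigma_{\lambda}$ with $\lambda\in\mathcal{P}(k,n)$ as an explicit polynomial in the special classes $c_{i}=\sigma_{i}$. The Bruhat decomposition exhibits $H^{*}(\IG(n-k,2n),\Z)$ as a free $\Z$-module of rank $|\mathcal{P}(k,n)|$, so it suffices to prove that $A$ is generated, as a $\Z$-module, by at most $|\mathcal{P}(k,n)|$ elements.

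The main obstacle is establishing this last bound. The natural strategy is a straightening algorithm: use $R^{p}_{n,k}$ to eliminate any ``virtual'' Chern class of $\mathcal{S}$ in degree $p>n-k$, and use $S^{s}_{n,k}$ to rewrite each $c_{s}^{2}$ with $s>k$ as a combination of products $c_{s-i}c_{s+i}$ (with $i\ge 1$), so that each $c_{s}$ with $s>k$ appears at most once in any surviving monomial --- precisely the $k$-strictness condition. One must then check that the resulting spanning set, constrained both by the $k$-strict condition on repeated parts $>k$ and by the rectangle $(n-k)\times(n+k)$ bound inherited from $R^{p}_{n,k}$, is in bijection with $\mathcal{P}(k,n)$. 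Showing that the straightening terminates and that no further relations shrink this spanning set below $|\mathcal{P}(k,n)|$ is the combinatorial heart of the argument, and is precisely what is carried out in [BKT1].
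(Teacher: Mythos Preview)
The paper does not prove this theorem: it is quoted verbatim from [BKT1, Theorem 1.2] and used as an input. So there is no ``paper's own proof'' to compare against; your proposal is instead a sketch of the original BKT1 argument.

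Your outline is correct and is indeed how the proof in [BKT1] runs. The verification of the relations is exactly as you say: the Whitney identity $c(\mathcal{S})c(\mathcal{Q})=1$ together with $\operatorname{rk}\mathcal{S}=n-k$ yields $R^{p}_{n,k}$, and the symplectic self-duality $\mathcal{Q}^{*}\simeq\mathcal{S}^{\perp}$ gives $c(\mathcal{Q})(t)\,c(\mathcal{Q})(-t)=c(\mathcal{S}^{\perp}/\mathcal{S})(t)$, whose degree-$2s$ part vanishes for $s>k$. Your second stage identifies the real work correctly: surjectivity of $\phi$ is the Giambelli formula of [BKT1], and injectivity is a rank count requiring a straightening argument to show that $A$ is spanned over $\Z$ by classes indexed by $\mathcal{P}(k,n)$. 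You rightly defer the combinatorial termination argument to [BKT1]; that is where the substance lies, and there is no shortcut in the present paper.
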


Our original proof of the  following result was longer. The following shorter proof  is due to L. Mihalcea. 
\begin{proposition}\label{chap1-thm1.5} The map $\xi^{P_{n-k}}: \Rep^{\mathbb{C}}_{\poly} (L^C_{n-k}) \to H^* (\IG (n-k, 2n), \mathbb{C})$ of Theorem \ref{thmmain} under the decomposition  \eqref{eqn201} for $r=n-k$ takes,
  for  $1 \leq i \leq k$,
$$
e_i \left( (\bar{t}_{n-k+1})^2, \ldots , (\bar{t}_{n})^2\right) \mapsto c^2_i + 2 \sum^{i}_{j=1} (-1)^j c_{i+j}c_{i-j},
$$
where $c_i=c_i(Q)$ is as defined before Theorem \ref{chap1-thm1.3} and  $e_i$ is the $i$-th elementary symmetric function. 
\end{proposition}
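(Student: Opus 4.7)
The plan is to reduce the identity to a computation in $H^*(G/B, \C)$ via the injective pullback $\pi^* \colon H^*(\IG(n-k, 2n), \C) \hookrightarrow H^*(G/B, \C)$. By the commutative diagram of Theorem \ref{thmmain} applied to $B \subset P_{n-k}^C$, one has $\pi^* \circ \xi^{P_{n-k}^C} = \xi^B \circ \gamma$, and the restriction $\gamma$ acts as the identity on functions already lying in $\C[T_C]^{W_{L_{n-k}^C}}$. Setting $y_j := \epsilon_{s_j} - \epsilon_{s_{j-1}}$, Lemma \ref{lastpropo} gives $\xi^B(\bar t_j) = y_j$, so the LHS of the proposition pulls back to $e_i(y_{n-k+1}^2, \ldots, y_n^2)$. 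It therefore suffices to establish, in $H^*(G/B, \C)$,
\[
\pi^*(c_i)^2 + 2\sum_{j=1}^i (-1)^j\, \pi^*(c_{i+j})\, \pi^*(c_{i-j}) = e_i(y_{n-k+1}^2, \ldots, y_n^2).
\]

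For this I first compute the Chern roots of $\pi^*\mathcal{Q}$. Using the tautological flag on $G/B$, whose $i$-th piece at the basepoint is $F_i = \C\langle e_1, \ldots, e_i\rangle$, each successive quotient $L_i := F_i/F_{i-1}$ is a line bundle whose $T_C$-character at the basepoint is $\tau_i$ for $1 \le i \le n$ and $-\tau_{2n+1-i}$ for $n+1 \le i \le 2n$, where $\tau_i$ denotes the $i$-th coordinate character. Because $\cl(\mu) = G \times^B \C_{\mu^{-1}}$, a line bundle whose fiber carries the $T$-character $\nu$ has first Chern class $-\beta(\nu)$; this yields $c_1(L_i) = -y_i$ for $i \le n$ and $c_1(L_i) = y_{2n+1-i}$ for $i > n$. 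Hence the Chern roots of $\pi^*\mathcal{Q} = \bar{\mathcal{E}}/\mathcal{S}$ form the multiset $\{-y_{n-k+1}, \ldots, -y_n\} \cup \{y_1, \ldots, y_n\}$, and the generating function $C(t) := \sum_i \pi^*(c_i)\, t^i$ factors as $C(t) = \prod_{j=n-k+1}^n (1 - y_j t) \cdot \prod_{j=1}^n (1 + y_j t)$.

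The crucial observation is then
\[
C(t)\, C(-t) = \prod_{j=n-k+1}^n (1 - y_j^2 t^2) \cdot \prod_{j=1}^n (1 - y_j^2 t^2),
\]
and the second product equals $1$ in $H^*(G/B, \C)$: its coefficients are, up to sign, the elementary symmetric polynomials $e_i(y_1^2, \ldots, y_n^2)$, which are images under $\beta$ of the standard generators of the hyperoctahedral $W$-invariants and therefore vanish in positive degree by Borel's theorem (equivalently, they express the triviality of the total Chern class of $\bar{\mathcal{E}}$). Extracting the coefficient of $t^{2i}$, pairing the indices $m$ and $2i-m$ symmetrically about $i$, and canceling the common factor $(-1)^i$ then yields the required identity. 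The main delicacy I anticipate is keeping straight the sign conventions between $T$-characters and Chern roots; once that is pinned down, the proof reduces to the one-line observation that the trivial factor in $C(t)\, C(-t)$ drops out, together with injectivity of $\pi^*$.
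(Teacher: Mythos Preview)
Your argument is correct and is essentially the paper's proof in different packaging. The paper introduces $x_j:=-c_1(\mathcal{F}_j/\mathcal{F}_{j-1})$ (your $y_j$), proves $c(\mathcal{Q}_{n-k})\,c(\mathcal{Q}_{n-k}^*)=\prod_{p=n-k+1}^{n}(1-x_p^2)$ via the exact sequence $0\to\mathcal{F}_j\to\mathcal{E}\to\mathcal{Q}_j\to 0$ together with $c(\mathcal{F}_n)c(\mathcal{F}_n^*)=1$ (the Lagrangian condition), and then reads off the degree-$2i$ part; your computation of $C(t)C(-t)$ and use of Borel's theorem to kill $\prod_{j=1}^n(1-y_j^2t^2)$ is exactly the same identity, since $c(\mathcal{Q}^*)$ encodes $C(-t)$ and the vanishing of $\prod_{j=1}^n(1-y_j^2)$ is precisely $c(\mathcal{F}_n)c(\mathcal{F}_n^*)=1$.
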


Before we come to the proof of the proposition, we need the following two lemmas:
\vskip1ex

Let $\Fl = G/B$ be the full flag variety for $G=\Sp (2n)$. It consists of
partial flags 
$$ F_\bullet: \,\, F_1 \subset F_2 \subset . . . \subset F_n \subset E := \C^{2n},\,\,\text{such that each $F_j$ is isotropic and $\dim F_j=j$}.$$
We can complete the partial flag to a full flag by taking $F_{n+j}:= F_{n-j}^\perp$.
The flags  $ F_\bullet$ give rise to  a
sequence of tautological vector bundles over $\Fl$:
$$\mathcal{F}_1 \subset \mathcal{F}_2 \subset . . . \subset \mathcal{F}_n \subset \mathcal{E},\,\,\,\text{with rank  $ \mathcal{F}_j =j$},$$
where $\mathcal{E}:\Fl\times \C^{2n}\to \Fl$ is the trivial rank $2n$ vector bundle. 
For $1 \leq j\leq  n,$
define 
$$x_j:= -c_1(\mathcal{F}_j/\mathcal{F}_{j-1}),$$ 
where $\mathcal{F}_0$ is taken to be the vector bundle of rank $0$.

\begin{lemma} \label{lemma14} For $1 \leq j \leq  n$, the Schubert divisor $\epsilon_{s_j}\in H^2(\Fl, \mathbb{Z})$ is given by
$$\epsilon_{s_j} = -c_1(\mathcal{F}_j)=x_1+\dots +x_j.$$

In particular, under $\xi^B$ for $G=\Sp(2n)$, $\bar{t}_j \mapsto x_j$ for any $1\leq j\leq n$. 
\end{lemma}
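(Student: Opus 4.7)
The plan is to pin down how the tautological line bundles $\mathcal{F}_j/\mathcal{F}_{j-1}$ relate to the Borel line bundles $\cl(\mu)$, then combine with the Borel presentation \eqref{eqnborel} and Lemma \ref{lastpropo}.

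First I would identify the $T$-weight of the fiber of $\mathcal{F}_j/\mathcal{F}_{j-1}$ over the base point $eB \in \Fl$. Writing $\varepsilon_j \in X(T)$ for the character $\mathbf{t} \mapsto t_j$ on the torus $T_C$ as in \eqref{eqnnew201}, the standard isotropic flag has $F_j = \text{span}(e_1,\dots,e_j)$, so the fiber $F_j/F_{j-1}$ is spanned by $e_j$ on which $T$ acts by $\varepsilon_j$. By the convention of Section~2 that $\cl(\mu) = G \times^B \C_{\mu^{-1}}$, having $T$-weight $+\varepsilon_j$ at $eB$ corresponds to $\mathcal{F}_j/\mathcal{F}_{j-1} \cong \cl(-\varepsilon_j)$. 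Hence
\[
x_j = -c_1(\mathcal{F}_j/\mathcal{F}_{j-1}) = -\beta(-\varepsilon_j) = \beta(\varepsilon_j).
\]

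Next, recall the standard identification of the type $C_n$ fundamental weights (from [Bo, Planche III]): $\omega_j = \varepsilon_1 + \dots + \varepsilon_j$ for $1 \le j \le n$. Applying $\beta$ and using \eqref{eqnborel} gives
\[
\epsilon_{s_j} = \beta(\omega_j) = \beta(\varepsilon_1)+\dots+\beta(\varepsilon_j) = x_1+\dots+x_j.
\]
The identity $\epsilon_{s_j} = -c_1(\mathcal{F}_j)$ then follows either directly from $c_1(\mathcal{F}_j) = \sum_{i=1}^j c_1(\mathcal{F}_i/\mathcal{F}_{i-1})$ in the Whitney sense (using the filtration by $\mathcal{F}_i$), or inductively on $j$.

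For the second claim, Lemma \ref{lastpropo} gives
\[
\xi^B(\bar{t}_j) = \epsilon_{s_j} - \epsilon_{s_{j-1}}
\]
(with the convention $\epsilon_{s_0}=0$), and substituting the formula just established yields
\[
\xi^B(\bar{t}_j) = (x_1+\dots+x_j) - (x_1+\dots+x_{j-1}) = x_j.
\]
The only genuinely delicate point, and the one I would expect to be the main source of sign errors, is the conversion between the $T$-weight of the tautological fiber and the weight labelling $\cl(\mu)$: one has to keep track of the inverse in the associated-bundle construction $G\times^B \C_{\mu^{-1}}$, which flips the sign and is exactly what produces the minus sign in the definition $x_j = -c_1(\mathcal{F}_j/\mathcal{F}_{j-1})$ in the first place.
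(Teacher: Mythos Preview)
Your proof is correct but follows a different route from the paper. The paper derives the first identity by citing equation \eqref{eqn202}, the [BKT1] fact that $c_1(\mathcal{Q}) = \sigma_1$ on each $\IG(j,2n)$; pulling back along $\Fl \to \IG(j,2n)$ and using $c_1(\mathcal{Q}) = -c_1(\mathcal{S})$ together with $\sigma_1 = \epsilon^{P_j}_{s_j}$ yields $-c_1(\mathcal{F}_j) = \epsilon_{s_j}$. You instead work entirely on $\Fl$ via the Borel homomorphism: you identify the $T$-weight of each line $\mathcal{F}_j/\mathcal{F}_{j-1}$ to get $x_j = \beta(\varepsilon_j)$, and then invoke the type-$C_n$ relation $\omega_j = \varepsilon_1 + \cdots + \varepsilon_j$ together with \eqref{eqnborel}. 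Your argument is more self-contained---it avoids the external input from [BKT1]---and makes the sign bookkeeping explicit; the paper's version is shorter because it packages the computation into a known Schubert-calculus identification on the Grassmannian. For the ``In particular'' statement the two proofs coincide, both invoking Lemma \ref{lastpropo}.
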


\begin{proof} The first part follows from the identity \eqref{eqn202}.

The `In particular' statement follows from Lemma \ref{lastpropo}.
\end{proof}

For  $1\leq j \leq  n$, let 
$$\mathcal{Q}_j:=\mathcal{E}/\mathcal{F}_j.$$
Observe that the symplectic form gives an isomorphism of vector bundles:
\begin{equation}\label{eqn102}\mathcal{Q}_j\simeq (\mathcal{F}_j^\perp)^*.
\end{equation}

\begin{lemma} \label{newlwmma2}For  $0\leq j \leq  n$, the following holds:
$$c(\mathcal{Q}_j)c(\mathcal{Q}_j^*)=\prod_{p=j+1}^n\,(1-x_p)(1+x_p),$$
where $c$ is the total Chern class. 
\end{lemma}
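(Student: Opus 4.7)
The plan is to split both $c(\mathcal{Q}_j)$ and $c(\mathcal{Q}_j^*)$ into Chern roots by extending the isotropic tautological flag to a complete flag of $\mathcal{E}$, and then exploit the triviality of $\mathcal{E}$ to collapse the resulting product.

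First I would extend the flag on $\Fl$ by setting $\mathcal{F}_{n+k}:=\mathcal{F}_{n-k}^{\perp}$ for $0\le k\le n$, so that $\mathcal{F}_{2n}=\mathcal{E}$ and $\mathcal{F}_n^{\perp}=\mathcal{F}_n$. The symplectic form induces, for each $k\ge 1$, a canonical isomorphism $\mathcal{F}_{n-k}^{\perp}/\mathcal{F}_{n-k+1}^{\perp}\simeq (\mathcal{F}_{n-k+1}/\mathcal{F}_{n-k})^{*}$, so the corresponding line bundle quotient contributes the Chern root $+x_{n-k+1}$ rather than $-x_{n-k+1}$. Consequently the full list of Chern roots of $\mathcal{E}$ is $\{-x_1,\dots,-x_n,x_n,\dots,x_1\}=\{\pm x_1,\dots,\pm x_n\}$, and since $\mathcal{E}$ is trivial this yields the key identity
$$\prod_{i=1}^{n}(1-x_i)(1+x_i)\;=\;c(\mathcal{E})\;=\;1 \quad \text{in } H^*(\Fl,\C),$$
which will collapse the final product.

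Next, using the isomorphism $\mathcal{Q}_j\simeq(\mathcal{F}_j^{\perp})^{*}$ from \eqref{eqn102} (and its dual $\mathcal{Q}_j^{*}\simeq\mathcal{F}_j^{\perp}$), I would apply the Whitney sum formula to the filtration
$$0\subset \mathcal{F}_1\subset\cdots\subset\mathcal{F}_n=\mathcal{F}_n^{\perp}\subset\mathcal{F}_{n-1}^{\perp}\subset\cdots\subset\mathcal{F}_{j+1}^{\perp}\subset\mathcal{F}_j^{\perp}$$
to obtain
$$c(\mathcal{F}_j^{\perp})=\prod_{i=1}^{n}(1-x_i)\cdot\prod_{i=j+1}^{n}(1+x_i),$$
and dually
$$c((\mathcal{F}_j^{\perp})^{*})=\prod_{i=1}^{n}(1+x_i)\cdot\prod_{i=j+1}^{n}(1-x_i).$$
Multiplying these two expressions and regrouping, the factor $\prod_{i=1}^{n}(1-x_i)(1+x_i)$ appears as a whole and equals $1$ by the previous paragraph, leaving exactly $\prod_{p=j+1}^{n}(1-x_p)(1+x_p)$, as claimed.

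The only nontrivial ingredient is the identification of $\mathcal{F}_{n-k}^{\perp}/\mathcal{F}_{n-k+1}^{\perp}$ with the dual of $\mathcal{F}_{n-k+1}/\mathcal{F}_{n-k}$ via the symplectic form, which fixes the sign of the Chern root on the upper half of the flag; once this is in place, everything reduces to Whitney-sum bookkeeping that collapses because of the triviality of $\mathcal{E}$.
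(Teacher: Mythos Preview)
Your argument is correct and uses the same ingredients as the paper's proof---Whitney sum, the triviality of $\mathcal{E}$, and the symplectic duality $\mathcal{Q}_j\simeq(\mathcal{F}_j^{\perp})^{*}$---only organized a bit differently. The paper avoids extending the flag: it simply writes $\prod_{p=j+1}^{n}(1-x_p)(1+x_p)=\dfrac{c(\mathcal{F}_n)c(\mathcal{F}_n^{*})}{c(\mathcal{F}_j)c(\mathcal{F}_j^{*})}$, uses the defining exact sequence $0\to\mathcal{F}_j\to\mathcal{E}\to\mathcal{Q}_j\to 0$ to rewrite the denominator as $c(\mathcal{Q}_j)c(\mathcal{Q}_j^{*})$, and invokes \eqref{eqn102} only at $j=n$ (where $\mathcal{F}_n^{\perp}=\mathcal{F}_n$) to make the numerator equal $1$. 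Your version instead applies \eqref{eqn102} for general $j$ and computes $c(\mathcal{F}_j^{\perp})$ directly from the Chern roots of the extended flag; the collapsing identity $\prod_{i=1}^{n}(1-x_i)(1+x_i)=1$ you derive is exactly the paper's $c(\mathcal{F}_n)c(\mathcal{F}_n^{*})=1$. So the two proofs coincide up to bookkeeping; the paper's route is marginally shorter since it never needs to identify the line bundle quotients $\mathcal{F}_{n-k}^{\perp}/\mathcal{F}_{n-k+1}^{\perp}$ individually.
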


\begin{proof} By definition,
\begin{equation} \label{eqn301}\prod_{p=j+1}^n\,(1-x_p)(1+x_p)= \prod_{p=j+1}^n\,c(\mathcal{F}_p/\mathcal{F}_{p-1})\cdot c\left((\mathcal{F}_p/\mathcal{F}_{p-1})^*\right)=
\frac{c(\mathcal{F}_n)}{c(\mathcal{F}_j)}\cdot \frac{c(\mathcal{F}_n^*)}{c(\mathcal{F}_j^*)}.
\end{equation}
From the exact sequence $0\to \mathcal{F}_j \to \mathcal{E}\to \mathcal{Q}_j\to 0$, we get 
\begin{equation}  \label{eqn203} 
c(\mathcal{Q}_j)\cdot c(\mathcal{F}_j) = 1\,\,\,\text{and}\,\,\, c(\mathcal{Q}_j^*)c(\mathcal{F}_j^*)=1, \end{equation}
and hence taking $j=n$ in the above equation and using the equation  \eqref{eqn102}, we get
 \begin{equation}  \label{eqn204} c(\mathcal{F}_n) c(\mathcal{F}_n^*)=1,\,\,\,\text{since $\mathcal{F}_n^\perp=\mathcal{F}_n$}.
\end{equation}  
Combining the equations \eqref{eqn301}, \eqref{eqn203} and \eqref{eqn204}, we get the lemma.
\end{proof}

\begin{proof} (of Proposition \ref{chap1-thm1.5}) 
By taking terms of degree $2i$ and $j=n-k$  in Lemma \ref{newlwmma2}, we obtain in $H^*(\Fl, \mathbb{Z})$:
$$c_i(\mathcal{Q}_j)^2+2\sum_{p=1}^i\,(-1)^pc_{i+p}(\mathcal{Q}_j)\cdot c_{i-p}(\mathcal{Q}_j)= e_i(x_{j+1}^2,\dots, x_n^2).$$
By the definition, the bundle $\mathcal{S}$ pulls back to the bundle $\mathcal{F}_{n-k}$ over  $\Fl$ under the projection $\Fl\to \IG(n-k, 2n)$.
Thus, the proposition follows from Lemma \ref{lemma14}.
\end{proof}
\begin{remark} Even though we do not need, the map $\xi^{P_{n-k}}: \Rep^{\mathbb{C}}_{\poly} (L^C_{n-k}) \to H^* (\IG (n-k, 2n), \mathbb{C})$ of Theorem \ref{thmmain} under the decomposition  \eqref{eqn201} takes
for $1\leq i \leq n-k$, 
  $$
  e_i \left(  \bar{t}_1, \ldots , \bar{t}_{n-k} \right) \mapsto c_i (S) =  \epsilon_{s_{n-k-i+1} \cdots s_{n-k}}.
  $$
This follows from [BKT1, $\S$1.2].
\end{remark}

\begin{definition}\label{defi14} [Inverse Limit]
  For any $k\geq 0$,  define the {\it stable cohomology ring} [BKT2, \S 1.3] as
  $$
  \mathbb{H}^* (\IG_k, \mathbb{Z}) = \varprojlim H^* (\IG(n-k, 2n), \mathbb{Z})
  $$
  as the inverse limit (in the category of graded rings) of the inverse system
  \begin{equation*}
  \cdots \leftarrow H^* (\IG (n-k, 2n), \mathbb{Z})\xleftarrow{\pi_n^*} H^* (\IG (n-k+1, 2n+2), \mathbb{Z}) \leftarrow \cdots ,
  \end{equation*}
where $\pi_n: \IG(n-k,  2n) \hookrightarrow \IG(n-k+1,  2n+2)$ is given by $V\mapsto T_n(V)\oplus \mathbb{C} e_{n+1}$ and $T_n:\C^{2n} \to \C^{2n+2}$ is the linear embedding taking $e_i \mapsto e_i$ for $1\leq i \leq n$ and taking $e_i \mapsto e_{i+2}$ for $n+1\leq i \leq 2n$.

  This ring has an additive basis consisting of Schubert classes $\sigma_\lambda$ for each $k-$strict partition $\lambda$. The natural ring homorphism $\varphi_{k, n}:\mathbb{H}^* (\IG_k, \mathbb{Z}) \to H^*(\IG(n-k, 2n), \mathbb{Z})$ takes $\sigma_\lambda$ to $\sigma_{\lambda}$ whenever $\lambda$ fits in a $(n-k) \times (n+k)$ rectangle and to zero otherwise. In particular, $\varphi_{k, n}$ is surjective. From the definition of the Chern classes $c_j=c_j^n(Q)$, it is easy to see that under the restriction map  $\pi_n^*: H^* (\IG (n-k+1, 2n+2), \mathbb{Z}) \to H^* (\IG (n-k, 2n), \mathbb{Z}), c_j^{n+1}\mapsto c_j^n$ for $1\leq j\leq n+k$ and $c_{n+k+1}^{n+1} \mapsto 0$. 

From the presentation of the ring $H^* (\IG(n-k, 2n), \mathbb{Z})$ (Theorem \ref{chap1-thm1.3}), none of the determinantal relations hold in the inverse limit. So, $\mathbb{H}^*(\IG_k, \mathbb{Z})$ is isomorphic to the polynomial ring $\mathbb{Z}[c_1, c_2, \ldots]$ modulo the relations:
  \begin{equation}\label{eqn205}
  (S^s) \,\,(s>k):\,\,\,\,\,\,\qquad\qquad\qquad c_s^2 + 2 \sum_{i=1}^s (-1)^i c_{s+i} c_{s-i}=0 .
  \end{equation}
  Take $k\geq 1$. Recall from Proposition \ref{prop5} that  
$$\Rep^\mathbb{C}_{\poly} (\Sp (2k)) \simeq  \mathbb{C}_{\sym} \left[(\bar{h}_1)^2 , \dots , (\bar{h}_k)^2 \right].$$
Define a ring homomorphism (for any $1\leq k\leq n$)
$$\iota^n_k: \Rep^\mathbb{C}_{\poly} (\Sp (2k)) \to \Rep^\mathbb{C}_{\poly} (L_{n-k}^C)$$
by taking $f(\bar{\bf h}) \mapsto  1 \otimes f (\bar{\bf t})$, where $\bar{\bf h}:= (\bar{h}_1, \dots , \bar{h}_k)$, 
$\bar{\bf t}:= (\bar{t}_{n-k+1}, \dots , \bar{t}_n)$ and  $f(\bar{\bf t})$ is the same polynomial written in the $\bar{\bf t}$-variables uner the transformation $\bar{h}_p \mapsto \bar{t}_{n-k+p}$. 
This gives rise to the map $\xi_{n, k}: = \xi^{P_{n-k}} \circ \iota_k^n : \Rep^{\mathbb{C}}_{\poly} (\Sp (2k)) \to H^* (\IG (n-k, 2n), \mathbb{C})$. Consider the following diagram, which is commutative because of Proposition  \ref{chap1-thm1.5}.
  \[
  \xymatrix{
     & \\
    \Rep^{\mathbb{C}}_{\poly}(\Sp(2k))\ar[r]^-{\xi_{n,k}}\ar[dr]^{\xi_{n+1,k}} & H^{*}(\IG(n-k,2n),\mathbb{C})\ar[u]^{\pi^{*}_{n-1}}\\
    & H^{*}(\IG(n-k+1,2n+2),\mathbb{C}) \ar[u]^{\pi^{*}_{n}}\\
    & \ar[u]^{\pi^{*}_{n+1}}
  }
  \]
The compatible ring homomorphisms   $\xi_{n,k}: \Rep^{\mathbb{C}}_{\poly}(\Sp(2k))\to  H^{*}(\IG(n-k,2n),\mathbb{C})$ combine to give  a ring homomorphism
$$\xi_k:  \Rep^{\mathbb{C}}_{\poly}(\Sp(2k))\to  \mathbb{H}^{*}(\IG_k,\mathbb{C}).$$ 
\end{definition}

The following theorem is one of our main results of the paper. 

\begin{theorem} \label{thmsp} Let $k\geq 1$ be an integer. 
 The above ring homomorphism  $\xi_k: \Rep^{\mathbb{C}}_{\poly} (\Sp (2k)) \to \mathbb{H}^* (\IG_k, \C)$ takes the generators
  \begin{equation} \label{eqn103}
  e_i \left((\bar{h}_1)^2, \dots , (\bar{h}_k)^2\right) \mapsto c_i^2 + 2 \sum_{j=1}^i (-1)^j c_{i+ j} c_{i-j},\,\,\,\text{for any $1\leq i \leq k$},
  \end{equation}
where $c_i=c_i(\mathcal{Q})$.

  In particular,  $\xi_k$  is injective.
\end{theorem}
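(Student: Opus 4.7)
My plan is to split the proof into the explicit formula \eqref{eqn103} and the injectivity assertion, both handled via the Buch--Kresch--Tamvakis presentation of $\mathbb{H}^{*}(\IG_k, \C)$ from Definition \ref{defi14}.

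For the formula, note that $\xi_k$ is by construction the limit of the maps $\xi_{n,k}$ under the inverse system, and Proposition \ref{chap1-thm1.5} already identifies the image of $e_i((\bar h_1)^2, \dots, (\bar h_k)^2)$ under each $\xi_{n,k}$. Since the transition maps $\pi_n^{*}$ send $c_j^{n+1}$ to $c_j^n$ for $j \leq n+k$, and since the expression $c_i^2 + 2\sum_{j=1}^i (-1)^j c_{i+j}c_{i-j}$ involves only $c_j$ with $j \leq 2i \leq 2k$, these formulas are compatible across the inverse system and descend to $\mathbb{H}^{*}(\IG_k,\C)$, yielding \eqref{eqn103}.

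For injectivity, since $\Rep^{\mathbb{C}}_{\poly}(\Sp(2k)) \cong \C[e_1, \dots, e_k]$ is a polynomial ring in $k$ variables by Proposition \ref{prop5}, it suffices to show that the images $f_i := c_i^2 + 2\sum_{j=1}^i (-1)^j c_{i+j}c_{i-j}$, for $1 \le i \le k$, are algebraically independent in $\mathbb{H}^{*}(\IG_k,\C)$. The key observation is that in each relation $(S^s)$ with $s > k$, the highest-indexed Chern class $c_{2s}$ (the $i = s$ summand of the sum) appears linearly with nonzero coefficient $2(-1)^s$, while all remaining terms of $(S^s)$ live in $\C[c_1, \dots, c_{2s-1}]$. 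Thus $(S^s)$ can be solved for $c_{2s}$, and recursive elimination over $s = k+1, k+2, \dots$ identifies
\begin{equation*}
\mathbb{H}^{*}(\IG_k, \C) \;\cong\; \C[c_1, c_2, \dots, c_{2k}, c_{2k+1}, c_{2k+3}, c_{2k+5}, \dots],
\end{equation*}
a polynomial ring in which $c_1, \dots, c_{2k}$ in particular generate a free polynomial subring.

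The final step is a Jacobian argument inside this subring. Each $f_i$ with $1 \le i \le k$ lies in $\C[c_1, \dots, c_{2k}]$, and among the variables $c_2, c_4, \dots, c_{2k}$ only $c_{2i}$ appears linearly in $f_i$: every other product $c_p c_q$ occurring in $f_i$ satisfies $p + q = 2i$ and $p, q \geq 1$, hence $p, q < 2i$. Writing $f_i = 2(-1)^i c_{2i} + R_i(c_1, \dots, c_{2i-1})$, the Jacobian matrix $\bigl(\partial f_i/\partial c_{2j}\bigr)_{1 \le i,j \le k}$ is lower triangular with nonzero diagonal entries $2(-1)^i$, so the Jacobian criterion delivers algebraic independence of $f_1, \dots, f_k$, and hence injectivity of $\xi_k$. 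The step I expect to be most delicate is the recursive elimination: one has to verify that the relations $(S^s)$ for $s > k$ genuinely allow a clean upper-triangular elimination of $c_{2k+2}, c_{2k+4}, \dots$ without inducing any hidden new relations among $c_1, \dots, c_{2k}$ in the stable limit.
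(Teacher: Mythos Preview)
Your argument is correct and takes a genuinely different route from the paper's. The paper proves injectivity by reducing modulo $2$: over $\mathbb{Z}/2$ the relations $(S^s)$ collapse to $c_s^2 = 0$ for $s > k$, so that
\[
\mathbb{Z}/2 \otimes_{\mathbb{Z}} \mathbb{H}^*(\IG_k,\mathbb{Z}) \;\cong\; \mathbb{Z}/2[c_1,\dots,c_k] \otimes \frac{\mathbb{Z}/2[c_{k+1},c_{k+2},\dots]}{\langle c_{k+1}^2,c_{k+2}^2,\dots\rangle},
\]
and modulo $2$ the map becomes $e_i \mapsto c_i^2$, which is visibly injective on $\mathbb{Z}/2[c_1,\dots,c_k]$ (it is the Frobenius). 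A short torsion-free argument then lifts injectivity back to $\mathbb{Z}$ and hence to $\mathbb{C}$. Your approach instead stays over $\mathbb{C}$ throughout: you use that each relation $(S^s)$ is linear in $c_{2s}$ with invertible coefficient $2(-1)^s$, so the system is triangular and eliminates exactly the variables $c_{2s}$ with $s>k$, exhibiting $\mathbb{H}^*(\IG_k,\mathbb{C})$ as a free polynomial ring containing $\mathbb{C}[c_1,\dots,c_{2k}]$ as a polynomial subring; the lower-triangular Jacobian in $c_2,c_4,\dots,c_{2k}$ then yields algebraic independence of the $f_i$. Your flagged ``delicate step'' is in fact routine: since $(S^s)$ involves only $c_1,\dots,c_{2s}$ and is linear in the top variable, the retraction $\psi$ sending each $c_{2s}$ (for $s>k$) to its recursively solved expression restricts to the identity on $\mathbb{C}[c_j : j \le 2k \text{ or } j>2k \text{ odd}]$, so no relations are induced among the surviving generators. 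The paper's mod-$2$ trick is slicker in that it sidesteps identifying $\mathbb{H}^*(\IG_k,\mathbb{C})$ explicitly as a polynomial ring, while your argument is more direct over $\mathbb{C}$ and makes the structure of the target ring transparent; note, though, that your method genuinely uses the invertibility of $2$ and would not give the integral statement the paper obtains along the way.
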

\begin{proof} The first part follows  from Proposition \ref{chap1-thm1.5}. 

We next prove the injectivity of $\xi_k$:

By Proposition \ref{prop5}, $ \Rep^{\mathbb{C}}_{\poly}(\Sp (2k)) $ is a polynomial ring over $\mathbb{C}$ generated by $\{e_1(\bar{\bf h}), \dots, e_k(\bar{\bf h})\}$. Let $ \Rep^{\mathbb{Z}}_{\poly}(\Sp (2k)) $ be the polynomial subring over $\mathbb{Z}$ generated by $\{e_1(\bar{\bf h}), \dots, e_k(\bar{\bf h})\}$. Then, by the equation \eqref{eqn103}, 
$$\xi_k (\Rep^{\mathbb{Z}}_{\poly}(\Sp (2k)))\subset  \mathbb{H}^* (\IG_k, \mathbb{Z}).$$
Thus, on restriction, we get the ring homomorphism
$$\xi_k^{\mathbb{Z}}: \Rep^{\mathbb{Z}}_{\poly}(\Sp (2k)) \to  \mathbb{H}^* (\IG_k, \mathbb{Z}).$$
Observe further that $\xi_k^{\mathbb{Z}}$ is a homomorphism of graded rings if we assign degree $4i$ to each $e_i$ (and the standard cohomological degree to $ \mathbb{H}^* (\IG_k, \mathbb{Z})$). 
Let $K$ be the Kernel of $\xi_k^{\mathbb{Z}}$. Since $ \mathbb{H}^* (\IG_k, \mathbb{Z})$ is a free $\mathbb{Z}$-module of finite rank in each degree, the induced homomorphism
\begin{equation}\label{eqn104} \mathbb{Z}/(2)\otimes_\mathbb{Z} \,K \to \mathbb{Z}/(2)\otimes_\mathbb{Z}\, \Rep^{\mathbb{Z}}_{\poly}(\Sp (2k))
\,\,\,\text{is injective}. 
\end{equation}
We next observe that the induced homomorphism 
\begin{equation}\label{eqn105}  \mathbb{Z}/(2)\otimes_\mathbb{Z}\, \Rep^{\mathbb{Z}}_{\poly}(\Sp (2k))\to   \mathbb{Z}/(2)\otimes_\mathbb{Z}\,\mathbb{H}^*(\IG_k, \mathbb{Z})
\,\,\,\text{is injective}. 
\end{equation}
To prove this, observe that 
\begin{equation}\label{eqn106}  \mathbb{Z}/(2)\otimes_\mathbb{Z}\, \Rep^{\mathbb{Z}}_{\poly}(\Sp (2k))\simeq \mathbb{Z}/(2)[e_1, \dots, e_k],
\end{equation}
and, by the defining relations $(S^s) \,(s>k)$ of $ \mathbb{H}^* (\IG_k, \mathbb{Z})$ as in equation \eqref{eqn205},
\begin{equation}\label{eqn107}  \mathbb{Z}/(2)\otimes_\mathbb{Z}\, \mathbb{H}^* (\IG_k, \mathbb{Z})
\simeq \mathbb{Z}/(2)[c_1, \dots, c_k] \otimes  \frac{ \mathbb{Z}/(2)[c_{k+1}, c_{k+2}\dots ] }{\langle c_{k+1}^2, c_{k+2}^2\dots \rangle}.
\end{equation}
Moreover, under the above identifications \eqref{eqn106} and \eqref{eqn107}, by the first part of the theorem, the ring homomorphism  $\xi_k^{\mathbb{Z}}$ modulo $2$ is given by 
  $$e_i \mapsto c_i^2,\,\,\,\text{for any $1\leq i\leq k$}.$$
In particular, it is injective. From this we obtain that 
$$ \mathbb{Z}/(2)\otimes_\mathbb{Z} \,K =0.$$
But, since $K$ is a finitely generated torsionfree $\mathbb{Z}$-module in each graded degree (thus free) we get that
$$K=0.$$
Since $\mathbb{C}$ is a torsionfree $\mathbb{Z}$-module, this clearly gives the injectivity of $\xi_k$ (cf. [Sp, Chap. 5, $\S$2, Lemma 5]). This proves the theorem.
\end{proof}
\begin{remark} \label{remark1}The  ring homomorphism  $\xi_k: \Rep^{\mathbb{C}}_{\poly} (\Sp (2k)) \to \mathbb{H}^* (\IG_k, \C)$  of the above Theorem \ref{thmsp} is {\it not} surjective, as can be easily seen since the domain is a finitely generated $\mathbb{C}$-algebra (by  Proposition \ref{prop5}) whereas the range is not (for otherwise for each $n$, $H^{*}(\IG(n-k,2n),\mathbb{C})$ would be generated by a fixed finite number  of generators independent of $n$). 
\end{remark}

\section{Injectivity Result for the Odd Orthogonal Group}

The treatment in this section is parallel to that of the last section dealing with $\Sp(2n)$. But, we include some details for completeness.

In this section, we consider the  special orthogonal group  $G=\SO (2n+1)$ ($n\geq 2$). We take the Springer morphism for  $\SO (2n+1)$ with respect to the first fundamental weight $\lambda= \omega_1.$ We will abbreviate $\theta_{\omega_1}$ by $\theta$, $\xi_\lambda^P$ by $\xi^P$  and $\Rep^\C_{\omega_1-\poly}(G)$ by $\Rep^\C_{\poly}(G)$. 

Let $V'=\mathbb{C}^{2n+1}$ be equipped with the
nondegenerate symmetric form $\langle \,,\,\rangle$ so that its matrix $E_B=\
\bigl(\langle e_i,e_j\rangle\bigr)_{1\leq i,j \leq 2n+1}$ (in the standard basis
 $\{e_1,\dots, e_{2n+1}\}$) is  the $(2n+1)\times (2n+1)$
antidiagonal matrix with $1'$s all along the antidiagonal except at the $(n+1, n+1)$-th
place where the entry is $2$. Note that the associated quadratic form on $V'$ is given by
\[Q(\sum t_ie_i)= t_{n+1}^2+\sum_{i=1}^n\,t_it_{2n+2-i}.\]
 Let $$\SO(2n+1):=\{g\in \SL(2n+1):
g \,\text{leaves  the quadratic  form $Q$ invariant}\}$$ be the associated
special orthogonal group.  Clearly, $\SO(2n+1)$ can be realized
as the fixed point subgroup $\SL(2n+1)^\delta$ under the involution $\delta:\SL(2n+1)\to \SL(2n+1)$
defined by $\delta(A)=E_B^{-1}(A^t)^{-1}E_B$.
The involution $\delta$
keeps both of $B$ and $T$ stable, where $B$ (resp. $T$) is the standard Borel (resp. maximal torus) of $\SL(2n+1)$. Moreover,
$B^\delta$ (respectively, $T^\delta$) is a Borel subgroup (respectively, a maximal torus)
of $\SO(2n+1)$. We denote $B^\delta, T^\delta$ by $B_B=B_{B_n},T_B=T_{B_n}$
respectively. Then, $T_B$ is given by:
\begin{equation}\label{eqnnew202}
T_{B} = \left\{{\bf t}=
\diag \bigl(
t_{1}, \dots, t_{n}, 1, 
 t^{-1}_{n}, \dots ,  t^{-1}_{1}\bigr)
:~t_{i}\in \mathbb{C}^{*}
\right\}.
\end{equation}
Its Lie algebra is given  by 
\begin{equation} \label{eq202}
\frt_{B} = \left\{\dot{\bf t}=
\diag \bigl(
x_{1}, \dots, x_{n}, 0, 
 -x_{n}, \dots ,  -x_{1}\bigr)
:~x_{i}\in \mathbb{C}
\right\}.
\end{equation}

We recall the following lemma from [Ku2, Lemma 10].
\begin{lemma}\label{lem2'}
The Springer morphism $\theta:G\to \mathfrak{g}$\, for $G=\SO (2n+1)$ is given by 
$$
g\mapsto \frac{g-E_B^{-1}g^{t}E_B}{2},\,\,\,\text{for}\,\, g\in G.
$$
(Observe that this is the Cayley transform.)
\end{lemma}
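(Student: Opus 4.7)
The plan is to mimic the proof of Lemma \ref{lem2} for $\Sp(2n)$: realize the decomposition $\End(V') = \mathfrak{g} \oplus \mathfrak{g}^\perp$ as the eigenspace decomposition of a single involution on $\End(V')$, so that the Springer projection becomes the corresponding antisymmetrization. The guiding observation is that for the defining representation the ambient map $\rho_{\omega_1}: G \to \End(V')$ is just the inclusion, so $\theta(g)$ is literally the $\mathfrak{g}$-component of $g$.

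Concretely, I would introduce $\tau:\End(V') \to \End(V')$ defined by $\tau(X) = E_B^{-1} X^t E_B$. Since $E_B^t = E_B$, a direct computation gives $\tau^2 = \mathrm{id}$. The $-1$-eigenspace $\End(V')^-$ consists of those $X$ satisfying $X^t E_B + E_B X = 0$, which is exactly the infinitesimal condition to preserve the symmetric form $\langle\,,\,\rangle$; hence $\End(V')^- = \mathfrak{so}(2n+1) = \mathfrak{g}$. The central step is to identify $\mathfrak{g}^\perp$ with the $+1$-eigenspace $\End(V')^+$, for which one checks that $\tau$ is an isometry of the trace form by the cyclic property of the trace:
$$\tr(\tau(A)\tau(B)) = \tr(E_B^{-1}A^t E_B E_B^{-1} B^t E_B) = \tr(A^t B^t) = \tr(AB).$$
Hence the $\pm 1$-eigenspaces of $\tau$ are mutually orthogonal with respect to $\langle\,,\,\rangle_{\tr}$, and a dimension count then forces $\mathfrak{g}^\perp = \End(V')^+$.

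Finally, every $X \in \End(V')$ decomposes as $X = \tfrac{1}{2}(X - \tau(X)) + \tfrac{1}{2}(X + \tau(X))$, with the two summands landing in $\End(V')^- = \mathfrak{g}$ and $\End(V')^+ = \mathfrak{g}^\perp$ respectively. Therefore the projection $\pi$ onto $\mathfrak{g}$ sends $X \mapsto \tfrac{1}{2}(X - \tau(X))$, and applying this to $\rho_{\omega_1}(g) = g$ gives the asserted formula $\theta(g) = \tfrac{1}{2}(g - E_B^{-1}g^t E_B)$. I do not anticipate any serious obstacle: the argument is formally identical to the symplectic case, the only subtlety being that $E_B$ is symmetric whereas $E_C$ is antisymmetric, yet in both cases the formula $\tau(X) = E^{-1}X^t E$ squares to the identity and picks out the correct Lie subalgebra as its $-1$-eigenspace.
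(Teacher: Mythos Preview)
Your argument is correct. The paper itself does not give a proof of this lemma; it merely recalls the statement from [Ku2, Lemma~10] (just as it does for the symplectic analogue, Lemma~\ref{lem2}). Your proof is the natural one and is presumably what appears in [Ku2]: identify the decomposition $\End(V')=\mathfrak{g}\oplus\mathfrak{g}^\perp$ with the $\mp 1$-eigenspace decomposition of the trace-form isometry $\tau(X)=E_B^{-1}X^tE_B$, so that the projection onto $\mathfrak{g}$ is the antisymmetrization $X\mapsto\tfrac{1}{2}(X-\tau(X))$. One small remark: your reference to ``mimicking the proof of Lemma~\ref{lem2}'' is slightly misleading in this context, since that lemma is also only cited rather than proved here; but the argument you supply works verbatim in both cases, the symmetry of $E_B$ (versus the antisymmetry of $E_C$) making no difference to the verification that $\tau$ is an involutive isometry.
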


From the description of the Springer morphism given above, we immediately get the following (cf. [Ku2, Corollary 11]:

\begin{corollary}\label{coro3'}
Restricted to the maximal torus $T_B$ as above, we get the following description of the Springer map $\theta$:
$$
\theta({\bf t})=
\diag \bigl(
\bar{t}_1, \dots, 
 \bar{t}_n, 0, 
  -\bar{t}_n, \dots, 
  -\bar{t}_1\bigr)., \,\,\,\text{where $\bar{t}_i:=\frac{t_i-t_i^{-1}}{2}$}.$$
\end{corollary}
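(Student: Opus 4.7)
The plan is to substitute the parametrization of ${\bf t}\in T_B$ from \eqref{eqnnew202} directly into the Cayley-transform formula $\theta(g)=(g-E_B^{-1}g^tE_B)/2$ supplied by Lemma \ref{lem2'}. Since ${\bf t}$ is diagonal we have ${\bf t}^t={\bf t}$, so the entire calculation collapses to understanding the single conjugation $E_B^{-1}{\bf t}E_B$, after which we merely subtract and halve.

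The one preliminary step I would isolate is a general fact about antidiagonal conjugation: for any diagonal matrix $g=\diag(a_1,\dots,a_{2n+1})$, the conjugate $E_B^{-1}gE_B$ is again diagonal, with entries reversed, namely $\diag(a_{2n+1},\dots,a_1)$. A short index-chase using the antidiagonal support of $E_B$ forces $i=j$ and $k=2n+2-i$ in the double sum for the $(i,j)$-entry of $E_B^{-1}gE_B$; for $i\ne n+1$ the corresponding entries of $E_B$ and $E_B^{-1}$ are both $1$. The only place one might worry is the middle entry $i=n+1$, where the asymmetric factor $2$ in $E_B$ is exactly compensated by the factor $1/2$ in $E_B^{-1}$. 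This middle-entry bookkeeping is really the only technical point in the argument, and it is what makes the odd-orthogonal calculation look as clean as the symplectic one of Corollary \ref{coro3}.

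Once this is in hand, plugging ${\bf t}=\diag(t_1,\dots,t_n,1,t_n^{-1},\dots,t_1^{-1})$ into the formula yields $E_B^{-1}{\bf t}E_B=\diag(t_1^{-1},\dots,t_n^{-1},1,t_n,\dots,t_1)$, so $\theta({\bf t})$ is diagonal with entries $(t_i-t_i^{-1})/2=\bar t_i$ in the first $n$ slots, $0$ in the middle (since $t_{n+1}=1$ equals its own inverse), and $-\bar t_i$ in the last $n$ slots read in reverse order, which is the claimed formula. The image visibly lies in $\mathfrak{t}_B$ as described in \eqref{eq202}, consistent with Lemma \ref{lemma1}; no $W$-equivariance or density argument is required, since the statement is just the restriction of an already-proved formula to the torus $T_B$.
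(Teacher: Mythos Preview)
Your proof is correct and takes essentially the same approach as the paper: the paper states that the corollary follows ``immediately'' from the Cayley-transform formula of Lemma~\ref{lem2'}, and your argument is precisely the direct substitution of ${\bf t}\in T_B$ into that formula, with the expected reversal under antidiagonal conjugation. Your careful bookkeeping at the middle entry (where the $2$ in $E_B$ cancels the $1/2$ in $E_B^{-1}$) simply makes explicit what the paper leaves implicit.
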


The following result follows easily from Corollary~\ref{coro3'} together with the description of the Weyl group (cf. [Ku2, Proposition 12]).

\begin{proposition}\label{prop5'}
 Let   $f:T_B\to \mathbb{C}$  be a regular map. Then, $f\in \Rep_{\poly}^\C(G)$  if and only if the following is satisfied:

There exists a {\it symmetric} polynomial $P_f(x_{1},\ldots,x_{n})$ such that 
\begin{equation*}
f({\bf t})=P_f\left((\bar{t}_1)^2,\dots, (\bar{t}_n)^2\right), \,\,\,\text{for}\,\,
 {\bf t} \in T_B  \, \text{ given by}\, \eqref{eqnnew202}.
\end{equation*}
\end{proposition}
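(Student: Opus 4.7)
The plan is to mirror the proof of the analogous Proposition \ref{prop5} for $\Sp(2n)$ almost verbatim, since the only inputs required are the explicit formula for the Springer morphism on $T_B$ (Corollary \ref{coro3'}) and the structure of the Weyl group invariants on $\mathfrak{t}_B$.

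First, I would unwind Definition \ref{maindefi} in the case $L=G=\SO(2n+1)$: a regular function $f\in\C[T_B]^W$ lies in $\Rep_{\poly}^\C(G)$ if and only if $f={\theta(G)}^*(P)$ for some $W$-invariant polynomial $P\in S(\mathfrak{t}_B^*)^W$, where $W=W_{B_n}$. Using the coordinates $(x_1,\ldots,x_n)$ on $\mathfrak{t}_B$ supplied by \eqref{eq202}, we may write $S(\mathfrak{t}_B^*)=\C[x_1,\ldots,x_n]$.

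Second, I would identify $S(\mathfrak{t}_B^*)^W$. Under the chosen coordinates, the Weyl group $W_{B_n}=(\Z/2)^n\rtimes S_n$ acts on $\mathfrak{t}_B$ by signed permutations of $x_1,\ldots,x_n$ (as in [Bo, Planche II]). By the classical fundamental theorem of invariants for the hyperoctahedral group,
\[
S(\mathfrak{t}_B^*)^{W_{B_n}}=\C_{\sym}[x_1^2,\ldots,x_n^2].
\]
Third, I would apply Corollary \ref{coro3'}: the Springer map sends ${\bf t}\in T_B$ to the diagonal matrix with entries $(\bar{t}_1,\ldots,\bar{t}_n,0,-\bar{t}_n,\ldots,-\bar{t}_1)$, so the pullback $\theta^*$ carries the coordinate function $x_i$ to $\bar{t}_i$ as a regular function on $T_B$. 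Consequently a symmetric polynomial $P(x_1^2,\ldots,x_n^2)$ pulls back to $P(\bar{t}_1^2,\ldots,\bar{t}_n^2)$, which gives both implications of the stated equivalence simultaneously.

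There is no serious obstacle here: the entire argument is a routine unpacking of Definition \ref{maindefi} in light of Corollary \ref{coro3'}. The only point requiring attention is the verification that the sign-change generator $s_n\in W_{B_n}$ acts on $\mathfrak{t}_B$ as $x_n\mapsto -x_n$ under the coordinates in \eqref{eq202}, but this is immediate from the standard description of the root system of type $B_n$, exactly as in the parallel derivation of Proposition \ref{prop5}.
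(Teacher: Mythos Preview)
Your proposal is correct and follows exactly the approach indicated in the paper, which simply states that the result ``follows easily from Corollary~\ref{coro3'} together with the description of the Weyl group.'' You have merely spelled out the two ingredients in detail: the identification $S(\mathfrak{t}_B^*)^{W_{B_n}}=\C_{\sym}[x_1^2,\ldots,x_n^2]$ coming from the hyperoctahedral action, and the fact that $\theta^*$ sends $x_i\mapsto\bar{t}_i$ by Corollary~\ref{coro3'}.
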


We recall the following result from [Ku2, Proposition 24].
\begin{lemma}\label{lastpropo'} Under the homomorphism $\xi^B:\Rep_{\poly}^\C(T_B) \to H^*(G/B, \C)$ of Theorem \ref{thmmain}
for $G=\SO(2n+1)$, 
$$ \bar{t}_i   \mapsto  (\epsilon_{s_i}- \epsilon_{s_{i-1}}), \,\,\,\text{for any}\,\, 1\leq i < n ,$$
and 
$$ \bar{t}_n \mapsto  2\epsilon_{s_n}- \epsilon_{s_{n-1}}.$$

\end{lemma}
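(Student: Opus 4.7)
The plan is to unwind the definition $\xi^B = \beta \circ (\theta^*)^{-1}$ and exploit the explicit formula for $\theta$ on the maximal torus given by Corollary \ref{coro3'}. First I would fix the coordinates $x_1,\dots,x_n$ on $\mathfrak{t}_B$ as in \eqref{eq202}, so that $\{x_i\}$ is a basis of $\mathfrak{t}_B^*$. Corollary \ref{coro3'} says precisely that, under $\theta: T_B \to \mathfrak{t}_B$, the function $x_i$ pulls back to the function $\bar t_i:T_B \to \mathbb{C}$. Hence in $S(\mathfrak{t}_B^*) \simeq \mathbb{C}[\mathfrak{t}_B]$ we have $(\theta^*)^{-1}(\bar t_i) = x_i$, which I shall also write as $\varepsilon_i$ viewed as an element of $\mathfrak{t}_B^*$. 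It then suffices to compute $\beta(\varepsilon_i) \in H^2(G/B,\mathbb{C})$.

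The second step is combinatorial, taking place entirely inside the weight lattice of type $B_n$. Following Bourbaki's Planche II (which the paper explicitly adopts as its indexing convention), the fundamental weights are $\omega_i = \varepsilon_1 + \cdots + \varepsilon_i$ for $1 \leq i \leq n-1$ and $\omega_n = \tfrac{1}{2}(\varepsilon_1 + \cdots + \varepsilon_n)$. Inverting these linear relations I get $\varepsilon_i = \omega_i - \omega_{i-1}$ for $1 \leq i \leq n-1$ (with the convention $\omega_0 := 0$, and correspondingly $\epsilon_{s_0}:=0$) and, crucially, $\varepsilon_n = 2\omega_n - \omega_{n-1}$. The factor of $2$ in the last identity is exactly what distinguishes the odd orthogonal case from the symplectic case of Lemma \ref{lastpropo}: the last fundamental weight in type $B_n$ is a spin weight and appears with coefficient $1/2$ in the expansion of $\varepsilon_n$.

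Finally, I would apply the Borel homomorphism $\beta$ to the two formulas above. Since $\beta(\omega_i) = \epsilon_{s_i}$ for every fundamental weight by \eqref{eqnborel}, this gives $\beta(\varepsilon_i) = \epsilon_{s_i} - \epsilon_{s_{i-1}}$ for $1 \leq i < n$ and $\beta(\varepsilon_n) = 2\epsilon_{s_n} - \epsilon_{s_{n-1}}$, which, combined with the first step, is exactly the claim. There is no real obstacle: the argument is pure bookkeeping between the multiplicative coordinates $t_i$ on $T_B$, the additive coordinates $x_i$ on $\mathfrak{t}_B$ linked by the Cayley transform of Lemma \ref{lem2'}, and the fundamental weights on which $\beta$ acts in a known way. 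The only subtle point is keeping track of the factor $1/2$ attached to $\omega_n$, which is the source of the anomalous formula for $\bar t_n$.
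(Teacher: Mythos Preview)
Your argument is correct and is exactly the natural one: pull $\bar t_i$ back to the linear coordinate $\varepsilon_i$ via Corollary~\ref{coro3'}, express $\varepsilon_i$ in the fundamental weights using Bourbaki's Planche~II for type $B_n$, and then apply $\beta(\omega_i)=\epsilon_{s_i}$. The paper itself gives no proof here---it simply cites [Ku2, Proposition~24]---and your computation is precisely the bookkeeping that lies behind that citation, including the key point that $\omega_n=\tfrac12(\varepsilon_1+\cdots+\varepsilon_n)$ forces $\varepsilon_n=2\omega_n-\omega_{n-1}$.
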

\begin{definition}\label{defiOG} For  $1\leq r \leq n$,  let $\OG(r,2n+1)$  be
the set of $r$-dimensional isotropic
subspaces of $V'$ with respect to the quadratic form $Q$, i.e.,
$$\OG(r,2n+1):=\{M\in \Gr(r,V'): Q(v)=0,\ \forall\,  v\in M\}.$$
Then, $\OG(r,2n+1)$  is the quotient $\SO(2n+1)/P_r^B$ of $\SO(2n+1)$ by
 the standard maximal parabolic subgroup
$P_r^B$  with $\Delta \setminus \{\alpha_r\}$ as the set of simple roots of its Levi component
$L_r^B$. (Again we take $L_r^B$ to be the unique Levi subgroup of $P_r^B$
 containing $T_B$.)
Then,
\begin{equation}\label{eqn302} L_r^B\simeq \GL (r)\times \SO(2(n-r)+1).
\end{equation}
In this case,  by the identity \eqref{eqn1'}  and Proposition \ref{prop5'}, 
 $$\Rep^\mathbb{C}_{\poly}(L_r^B) \simeq \C_{\sym}[\bar{t}_1,  \dots ,  \bar{t}_r]\otimes_\C \C_{\sym}[(\bar{t}_{r+1})^2,  \dots ,  (\bar{t}_n)^2].$$
\end{definition}

{\it From now on we fix $k\geq 0$ and consider  $\OG(n-k,2n+1)$.}

 The Schubert varieties in  $\OG(n-k,2n+1)$ are again  parametrized by $\mathcal{P}(k,n)$ consisting of 
 $k$-strict partitions contained in the $(n-k) \times (n+k)$ rectangle. The codimension of this variety  is equal to $|\lambda|$. Let $\tau_\lambda
\in H^{2 |\lambda|}(\OG(n-k,2n+1), \Z)$ denote the cohomology class Poincar\'e dual to the corresponding fundamental class $[X^B_\lambda]$ of the Schubert variety associated to $\lambda$.Thus, $\{\tau_\lambda\}_{\lambda \in \mathcal{P}(k,n)}$ gives the Schubert basis of $ H^*(\OG(n-k,2n+1), \Z)$ (cf. [BKT1, $\S$2.1]).

We have  the following short exact sequence of vector bundles over $\OG(n-k,2n+1)$:
$$
0 \to \mathcal{S}_B \to \bar{\mathcal{E}}' \to \mathcal{Q}_B \to 0,
$$
where $ \bar{\mathcal{E}}'$ is the trivial bundle of rank $2n+1$, $\mathcal{S}_B$ is the tautological subbundle of rank $n-k$ and $\mathcal{Q}_B$ is the  quotient bundle of rank $n+k+1$. Let  $c_i=c_i(\mathcal{Q}_B)$ ($1\leq i\leq n+k$) denote  the $i^{th}$ Chern class of the quotient bundle $\mathcal{Q}$. (Observe that $c_{n+k+1}=0$ as can be seen by pulling $\mathcal{Q}_B$ to $\SO(2n+1)/T_B$, where it admits a nowhere vanishing section given by the vector $e_{n+1}$.) Then, by [BKT1, $\S$2.3],
\begin{equation} \label{eqn303} 
c_i (\mathcal{Q}_B) =
\begin{cases}
  \tau_i & if \quad 1\leq i \leq k\\
  2\tau_i & if \quad k <i \leq n+k ,
\end{cases}
\end{equation}
where $\tau_i:=\tau_{(i)}$ and $(i)$ is the partition with single term $i$. 

We have the following presentation of the cohomology ring due to [BKT1, Theorem 2.2(a)]. In the following  we follow the convention that  $\tau_0 =1$ and $\tau_p =0$ if $p < 0$ or $p > n+k$.

\begin{theorem}\label{chap1-thm1.3'}
  The cohomology ring $H^* (\OG(n-k, 2n+1), \mathbb{Z})$ is presented as a  quotient of the ploynomial ring $\mathbb{Z}[\tau_{1}, \ldots, \tau_{n+k}]$ modulo the relations:
  $$
  (\bar{R}^p_{n,k})\,\, (n-k+1\leq p \leq n):\,\,\,\,\,\qquad\qquad \det (\delta_{1+j-i}\tau_{1+ j -i})_{1 \le i , j \le p} =0,
  $$
 $$
  (\bar{R}^p_{n,k})\,\, (n+1\leq p \leq n+k):\,\,\,\,\,\qquad\qquad \sum_{r=k+1}^p\, (-1)^r \tau_r\det (\delta_{1+j-i}\tau_{1+ j -i})_{1 \le i , j \le p-r} =0,
  $$
  and
  $$
   (\bar{S}^s_{n,k}) \,\,(k+1\leq s \leq n):\,\,\,\,\, \qquad\qquad \tau_s^2 + \sum^{s}_{i=1} (-1)^i\delta_{s-i}\tau_{s+i} \tau_{s-i}=0,
  $$
where $\delta_p=1$ if $p\leq k$ and $\delta_p=2$ otherwise.
\end{theorem}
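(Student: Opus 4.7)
The plan is to establish the presentation in three stages: derive the listed relations from the geometry of the tautological bundles, then control the ideal of relations precisely enough to conclude the presentation is complete by comparing ranks with $|\mathcal{P}(k,n)|$.

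First, I would extract the determinantal relations from the short exact sequence $0 \to \mathcal{S}_B \to \bar{\mathcal{E}}' \to \mathcal{Q}_B \to 0$. The Whitney sum formula gives $c(\mathcal{S}_B) \cdot c(\mathcal{Q}_B) = 1$, so each $c_p(\mathcal{S}_B)$ equals (up to sign) a Jacobi--Trudi determinant in the $c_i(\mathcal{Q}_B)$. The rank constraint $c_p(\mathcal{S}_B) = 0$ for $p > n-k$, after the substitution $c_i(\mathcal{Q}_B) = \delta_i \tau_i$, immediately yields the first family of relations $(\bar{R}^p_{n,k})$ for $n-k+1 \le p \le n$. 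For $n+1 \le p \le n+k$, combining the same vanishing with the auxiliary identity $c_{n+k+1}(\mathcal{Q}_B) = 0$ (noted in the paper, via the nowhere-vanishing section of $\mathcal{Q}_B$ coming from $e_{n+1}$) and reorganizing via cofactor expansion produces the second family of relations in the compact alternating form $\sum_{r=k+1}^p (-1)^r \tau_r \det(\cdots)_{p-r} = 0$.

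For the quadratic relations $(\bar{S}^s_{n,k})$, I would exploit the symmetric form to obtain the bundle isomorphism $\mathcal{Q}_B^* \cong \mathcal{S}_B^\perp$, which, together with the filtration $\mathcal{S}_B \subset \mathcal{S}_B^\perp \subset \bar{\mathcal{E}}'$, exhibits the middle quotient $\mathcal{T} := \mathcal{S}_B^\perp/\mathcal{S}_B$ as a rank-$(2k+1)$ bundle carrying a nondegenerate symmetric form. Multiplicativity of total Chern classes gives
$$c(\mathcal{Q}_B)\cdot c(\mathcal{Q}_B^*) \;=\; c(\mathcal{T}),$$
and since $\mathcal{T}$ has rank $2k+1$ we get $c_{2s}(\mathcal{T}) = 0$ for $s \ge k+1$. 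Extracting the degree-$2s$ coefficient from $c(\mathcal{Q}_B)c(\mathcal{Q}_B^*)$ yields
$$c_s^2 \;+\; 2 \sum_{i=1}^{s} (-1)^i\, c_{s+i}\, c_{s-i} \;=\; 0 \quad\text{for } s \ge k+1,$$
which, after translating via $c_i = \delta_i \tau_i$ and dividing by $4$ (using $\delta_s = \delta_{s+i} = 2$ for $s > k$, $i \ge 1$), is exactly $(\bar{S}^s_{n,k})$.

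The third stage will be the main obstacle: showing that the quotient of $\mathbb{Z}[\tau_1,\ldots,\tau_{n+k}]$ by the listed relations is a free $\mathbb{Z}$-module of the correct rank $|\mathcal{P}(k,n)|$ in each graded degree, and that this additive basis matches the Schubert basis. I would attack this by constructing an explicit Giambelli-type formula expressing each Schubert class $\tau_\lambda$ for $\lambda \in \mathcal{P}(k,n)$ as a specific polynomial in the special classes modulo the relations, and then proving these Giambelli polynomials are $\mathbb{Z}$-linearly independent in the quotient. An alternative is to produce a Gröbner basis of the relation ideal with respect to a term order adapted to $k$-strict partitions, so that the standard monomials are in bijection with $\mathcal{P}(k,n)$. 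The delicate point is ensuring that the two families of determinantal relations together with the quadratic ones interact in just the right way to cut the quotient down to exactly the right rank, with no spurious redundancies and no further relations needed.
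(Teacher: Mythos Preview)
The paper does not prove this theorem: it is stated as ``the following presentation of the cohomology ring due to [BKT1, Theorem~2.2(a)]'' and no argument is given. So there is no in-paper proof to compare against; the authors are simply quoting a result of Buch--Kresch--Tamvakis.

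Your outline is a reasonable sketch of how such a presentation is established, and it is in the same spirit as what [BKT1] actually do: extract the relations from Chern-class identities for the tautological sequence and the self-duality induced by the symmetric form, and then verify completeness of the presentation by a rank/basis argument tied to $\mathcal{P}(k,n)$. A couple of small caveats on your derivations. First, when you pass from $c_s^2+2\sum(-1)^i c_{s+i}c_{s-i}=0$ to $(\bar{S}^s_{n,k})$ by ``dividing by $4$,'' you need that $H^*(\OG(n-k,2n+1),\mathbb{Z})$ is torsion-free (which it is, by the Schubert basis); over $\mathbb{Z}$ this step is not automatic. Second, your account of the second determinantal family $(\bar{R}^p_{n,k})$ for $n+1\le p\le n+k$ is vague: these are not simply cofactor expansions of $c_p(\mathcal{S}_B)=0$, and writing them in the stated form with the sum over $r>k$ requires a more careful manipulation (in [BKT1] this is part of the packaging of the relations into the $\tau$-variables rather than the $c$-variables). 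Finally, as you correctly identify, stage three---showing the listed relations generate the full ideal and that the quotient has a free $\mathbb{Z}$-basis indexed by $\mathcal{P}(k,n)$---is where essentially all the work lies; neither of your two suggested routes (Giambelli formulas or a Gr\"obner argument) is short, and [BKT1] carry this out via their theory of raising operators and theta polynomials.
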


\begin{proposition}\label{chap1-thm1.5'} The map $\xi^{P^B_{n-k}}: \Rep^{\mathbb{C}}_{\poly} (L^B_{n-k}) \to H^* (\OG (n-k, 2n+1), \mathbb{C})$ of Theorem \ref{thmmain} under the decomposition  \eqref{eqn302}  takes,
  for  $1 \leq i \leq k$,
$$
e_i \left( (\bar{t}_{n-k+1})^2, \ldots , (\bar{t}_{n})^2\right) \mapsto c^2_i + 2 \sum^{i}_{j=1} (-1)^j c_{i+j}c_{i-j},
$$
where $c_i=c_i(\mathcal{Q}_B)$  and  $e_i$ is the $i$-th elementary symmetric function. 
\end{proposition}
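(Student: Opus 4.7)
The plan is to mimic the proof of Proposition \ref{chap1-thm1.5}, lifting the identity to the full flag variety $\Fl = \SO(2n+1)/B_B$ by means of the standard tautological bundles and then descending via the projection $\pi: \Fl \to \OG(n-k, 2n+1)$. Two modifications dictated by the $B_n$-geometry will need to be handled: the factor of $2$ appearing in the formula $c_1(\mathcal{Q}_B) = 2\tau_1$ on $\OG(n, 2n+1)$ (see \eqref{eqn303}), and the presence of the one-dimensional self-dual quotient $\mathcal{F}_n^\perp/\mathcal{F}_n$, which is trivial (and hence invisible) in the symplectic case.

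Concretely, I would begin by introducing the tautological isotropic flag $\mathcal{F}_1 \subset \mathcal{F}_2 \subset \cdots \subset \mathcal{F}_n \subset V'$ on $\Fl$, setting $x_j := -c_1(\mathcal{F}_j/\mathcal{F}_{j-1})$, and proving the analog of Lemma \ref{lemma14}: under $\xi^B$, $\bar{t}_j \mapsto x_j$ for all $1 \leq j \leq n$. For $1 \leq j < n$ the argument is unchanged from the symplectic case, since the projection $\Fl \to \OG(j, 2n+1)$ (with $k' = n - j \geq 1$) together with \eqref{eqn303} give $c_1(\mathcal{Q}_B) = \tau_1 = \epsilon_{s_j}$, hence $\epsilon_{s_j} = -c_1(\mathcal{F}_j) = x_1 + \cdots + x_j$ on $\Fl$. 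For $j = n$ one has $k' = 0$, so \eqref{eqn303} gives $c_1(\mathcal{Q}_B) = 2\tau_1 = 2\epsilon_{s_n}$ and therefore $x_1 + \cdots + x_n = 2\epsilon_{s_n}$, whence $x_n = 2\epsilon_{s_n} - \epsilon_{s_{n-1}}$; this matches Lemma \ref{lastpropo'} exactly, the two factors of $2$ cancelling.

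Next I would establish the Chern-class identity
$$ c(\mathcal{Q}_j) \cdot c(\mathcal{Q}_j^*) = \prod_{p=j+1}^n (1 - x_p)(1 + x_p), \qquad 0 \leq j \leq n, $$
in $H^*(\Fl, \C)$, where $\mathcal{Q}_j := V'/\mathcal{F}_j$. As in Lemma \ref{newlwmma2}, the right-hand side telescopes to $c(\mathcal{F}_n) c(\mathcal{F}_n^*) / \bigl(c(\mathcal{F}_j) c(\mathcal{F}_j^*)\bigr)$, so it suffices to show $c(\mathcal{F}_n) c(\mathcal{F}_n^*) = 1$. Unlike the $C_n$-case, here $\mathcal{F}_n \subsetneq \mathcal{F}_n^\perp$, and the quotient $M := \mathcal{F}_n^\perp/\mathcal{F}_n$ is a line bundle. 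The symmetric form on $V'$ induces a nondegenerate self-pairing on $M$, i.e.\ an isomorphism $M^{\otimes 2} \simeq \mathcal{O}$, hence $c_1(M) = 0$ in $H^2(\Fl, \C)$ and $c(\mathcal{F}_n) = c(\mathcal{F}_n^\perp)$. Combining this with the identity $c(\mathcal{F}_n^\perp) \cdot c(\mathcal{F}_n^*) = 1$ --- which follows from the short exact sequence $0 \to \mathcal{F}_n^\perp \to V' \to \mathcal{F}_n^* \to 0$ coming from the form-induced duality $V'/\mathcal{F}_n^\perp \simeq \mathcal{F}_n^*$ --- yields $c(\mathcal{F}_n) c(\mathcal{F}_n^*) = 1$, as required.

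Finally, extracting the degree-$2i$ component at $j = n - k$ and using $\xi^B(\bar{t}_p) = x_p$ from the first step, one obtains in $H^*(\Fl, \C)$
$$ c_i(\mathcal{Q}_{n-k})^2 + 2 \sum_{p=1}^i (-1)^p c_{i+p}(\mathcal{Q}_{n-k}) \, c_{i-p}(\mathcal{Q}_{n-k}) = e_i\bigl( x_{n-k+1}^2, \ldots, x_n^2 \bigr) = \xi^B\bigl(e_i((\bar{t}_{n-k+1})^2, \ldots, (\bar{t}_n)^2)\bigr). $$
Since $\pi^* \mathcal{S}_B = \mathcal{F}_{n-k}$ (hence $\pi^* c_i(\mathcal{Q}_B) = c_i(\mathcal{Q}_{n-k})$) and $\pi^*$ is injective on cohomology, the identity descends to $H^*(\OG(n-k, 2n+1), \C)$, giving the proposition. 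The principal obstacle is the treatment of the self-dual line bundle $M$ in the second step: only $2c_1(M) = 0$ holds with integer coefficients, so it is essential to work with $\C$- (or $\Q$-) coefficients at this stage of the argument.
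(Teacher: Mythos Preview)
Your proof is correct and follows essentially the same route as the paper: lift to $\Fl_B$, identify $\bar{t}_j$ with $x_j$ via Lemma~\ref{lastpropo'} and \eqref{eqn303}, establish the total-Chern-class identity for $\mathcal{Q}_j$, extract the degree-$2i$ part at $j=n-k$, and descend. The single difference is in handling the line bundle $M=\mathcal{F}_n^\perp/\mathcal{F}_n$: you use self-duality $M^{\otimes 2}\simeq\mathcal{O}$ to get $2c_1(M)=0$ (hence $c_1(M)=0$ over $\C$, which suffices here), whereas the paper observes that $M$ pulled back to $\SO(2n+1)/T_B$ is trivialized by the $T_B$-fixed vector $e_{n+1}$, yielding $c_1(M)=0$ already over $\mathbb{Z}$.
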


\begin{proof} It follows by the same proof as that of the corresponding Propositionn \ref{chap1-thm1.5} once we use the following two lemmas.
\end{proof} 

Let $\Fl_B = G/B_B$ be the full flag variety for $G=\SO (2n+1)$. It consists of
partial flags 
$$ \bar{F}_\bullet: \,\, \bar{F}_1 \subset \bar{F}_2 \subset . . . \subset \bar{F}_n \subset E' := \C^{2n+1},\,\,\text{such that each $\bar{F}_j$ is istropic and $\dim F_j=j$}.$$
We can complete the partial flag to a full flag by taking $\bar{F}_{n+j}:= \bar{F}_{n-j}^\perp$.
The flags  $ \bar{F}_\bullet$ give rise to  a
sequence of tautological vector bundles over $\Fl_B$:
$$\bar{\mathcal{F}}_1 \subset \bar{\mathcal{F}}_2 \subset . . . \subset \bar{\mathcal{F}}_n \subset \mathcal{E}',\,\,\,\text{with rank  $ \bar{\mathcal{F}}_j =j$},$$
where $\mathcal{E}':\Fl_B\times \C^{2n+1}\to \Fl_B$ is the trivial rank $2n+1$ vector bundle. 
For $1 \leq j\leq  n,$
define 
$$\bar{x}_j:= -c_1(\bar{\mathcal{F}}_j/\bar{\mathcal{F}}_{j-1}),$$ 
where $\bar{\mathcal{F}}_0$ is taken to be the vector bundle of rank $0$.

The first part of the following lemma follows from equation \eqref{eqn303}. The `In particular' statement follows from Lemma \ref{lastpropo'}.
\begin{lemma} For $1 \leq j \leq  n$, the Schubert divisor $\epsilon_{s_j}\in H^2(\Fl_B, \mathbb{Z})$ is given by
$$\epsilon_{s_j} = -c_1(\bar{\mathcal{F}}_j)=\bar{x}_1+\dots +\bar{x}_j,\,\,\,\text{for $j<n$, \,\,and }$$
$$2\epsilon_{s_n} = -c_1(\bar{\mathcal{F}}_n)=\bar{x}_1+\dots +\bar{x}_n.$$

In particular, under $\xi^B$ for $G=\SO(2n+1)$, $\bar{t}_j \mapsto \bar{x}_j$ for any $1\leq j\leq n$. 
\end{lemma}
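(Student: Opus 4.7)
The plan is to mirror the argument for the symplectic case (Lemma \ref{lemma14}), adjusting for the factor of $2$ that appears at the long simple root of type $B_n$. There are essentially two distinct identities to verify in the first display, and the ``In particular'' statement is purely formal after both are in hand.

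First, for the elementary identity $-c_1(\bar{\mathcal{F}}_j) = \bar{x}_1 + \cdots + \bar{x}_j$, I would use the filtration $0 \subset \bar{\mathcal{F}}_1 \subset \bar{\mathcal{F}}_2 \subset \cdots \subset \bar{\mathcal{F}}_n$ on $\Fl_B$. The total Chern class is multiplicative over short exact sequences, so
\[
c_1(\bar{\mathcal{F}}_j) = \sum_{i=1}^j c_1(\bar{\mathcal{F}}_i/\bar{\mathcal{F}}_{i-1}) = -\sum_{i=1}^j \bar{x}_i,
\]
which is purely routine.

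Second, for the identification of $-c_1(\bar{\mathcal{F}}_j)$ with the appropriate multiple of $\epsilon_{s_j}$, I would pull back equation \eqref{eqn303} along the canonical projection $\pi_j: \Fl_B \to \OG(j,2n+1) = G/P_j^B$. On the target, the Section 4 parameter is $k' = n-j$, the tautological subbundle $\mathcal{S}_B$ pulls back to $\bar{\mathcal{F}}_j$, and $c_1(\mathcal{Q}_B) = -c_1(\mathcal{S}_B)$. For $j < n$ we have $k' \geq 1$, so the ``$1 \leq i \leq k$'' branch of \eqref{eqn303} applies and gives $c_1(\mathcal{Q}_B) = \tau_1 = \epsilon_{s_j}$; for $j = n$ we have $k' = 0$, so the ``$k < i \leq n+k$'' branch applies and gives $c_1(\mathcal{Q}_B) = 2\tau_1 = 2\epsilon_{s_n}$. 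Pulling back yields both halves of the first display. The only point of care is keeping straight which branch of \eqref{eqn303} applies at $j = n$; this is the sole source of the extra $2$ and is the one place the argument genuinely differs from its symplectic counterpart.

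Finally, for the ``In particular'' claim, I would simply subtract telescopes. By Lemma \ref{lastpropo'}, $\xi^B(\bar{t}_i) = \epsilon_{s_i} - \epsilon_{s_{i-1}}$ for $1 \leq i < n$ and $\xi^B(\bar{t}_n) = 2\epsilon_{s_n} - \epsilon_{s_{n-1}}$. For $i < n$ the first part of the lemma rewrites this as $(\bar{x}_1+\cdots+\bar{x}_i) - (\bar{x}_1+\cdots+\bar{x}_{i-1}) = \bar{x}_i$, and for $i = n$ the factor of $2$ on $\epsilon_{s_n}$ exactly cancels against the coefficient already built into the first display, producing $(\bar{x}_1+\cdots+\bar{x}_n) - (\bar{x}_1+\cdots+\bar{x}_{n-1}) = \bar{x}_n$. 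No genuine obstacle is expected; the whole proof reduces to correctly matching the $B_n$ convention at the short simple root $\alpha_n$.
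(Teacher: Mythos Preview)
Your proposal is correct and follows essentially the same approach as the paper: the paper's proof is simply the one-line remark that the first part follows from equation \eqref{eqn303} and the ``In particular'' statement from Lemma \ref{lastpropo'}. You have merely unpacked these citations in detail, correctly isolating the case distinction $j<n$ versus $j=n$ (i.e., $k'=n-j\ge 1$ versus $k'=0$) as the source of the factor of $2$.
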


For  $1\leq j \leq  n$, let 
$$\bar{\mathcal{Q}}_j:=\mathcal{E}'/\bar{\mathcal{F}}_j .$$
Observe that the orthogonal form gives an isomorphism of vector bundles:
\begin{equation}\label{eqn102'}\bar{\mathcal{Q}}_j\simeq (\bar{\mathcal{F}}_j^\perp)^*.
\end{equation}

\begin{lemma} \label{newlwmma2'}For  $0\leq j \leq  n$, the following holds:
$$c(\bar{\mathcal{Q}}_j)c(\bar{\mathcal{Q}}_j^*)=\prod_{p=j+1}^n\,(1-\bar{x}_p)(1+\bar{x}_p),$$
where $c$ is the total Chern class. 
\end{lemma}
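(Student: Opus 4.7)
My plan is to mimic the proof of Lemma \ref{newlwmma2} from the symplectic case. The telescoping step is formal: by the definition of $\bar{x}_p = -c_1(\bar{\mathcal{F}}_p/\bar{\mathcal{F}}_{p-1})$,
$$\prod_{p=j+1}^n (1-\bar{x}_p)(1+\bar{x}_p) \;=\; \prod_{p=j+1}^n c(\bar{\mathcal{F}}_p/\bar{\mathcal{F}}_{p-1})\, c\bigl((\bar{\mathcal{F}}_p/\bar{\mathcal{F}}_{p-1})^*\bigr) \;=\; \frac{c(\bar{\mathcal{F}}_n)\, c(\bar{\mathcal{F}}_n^*)}{c(\bar{\mathcal{F}}_j)\, c(\bar{\mathcal{F}}_j^*)}.$$
Next I would invoke the short exact sequence $0\to \bar{\mathcal{F}}_j \to \mathcal{E}' \to \bar{\mathcal{Q}}_j \to 0$ and its dual to obtain $c(\bar{\mathcal{F}}_j)\,c(\bar{\mathcal{F}}_j^*) = \bigl(c(\bar{\mathcal{Q}}_j)\,c(\bar{\mathcal{Q}}_j^*)\bigr)^{-1}$. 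Thus the lemma reduces to the single identity $c(\bar{\mathcal{F}}_n)\,c(\bar{\mathcal{F}}_n^*) = 1$.

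The main obstacle, and the only place where the argument genuinely departs from the symplectic case, is that the top isotropic subbundle $\bar{\mathcal{F}}_n$ of rank $n$ is \emph{strictly smaller} than its orthogonal complement $\bar{\mathcal{F}}_n^\perp$, which in type $B_n$ has rank $n+1$. So one cannot simply invoke $\bar{\mathcal{F}}_n = \bar{\mathcal{F}}_n^\perp$ as in the $C_n$ case to close the identity. Instead I would apply the type-B analogue of \eqref{eqn102'} with $j=n$, namely the short exact sequence $0 \to \bar{\mathcal{F}}_n^\perp \to \mathcal{E}' \to \bar{\mathcal{F}}_n^* \to 0$, which already yields $c(\bar{\mathcal{F}}_n^\perp)\,c(\bar{\mathcal{F}}_n^*) = 1$. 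It then suffices to show $c(\bar{\mathcal{F}}_n^\perp) = c(\bar{\mathcal{F}}_n)$, i.e. that the rank-one quotient $L := \bar{\mathcal{F}}_n^\perp/\bar{\mathcal{F}}_n$ has trivial total Chern class $c(L)=1$.

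For this, I would observe that the symmetric form on $\mathcal{E}'$ restricted to $\bar{\mathcal{F}}_n^\perp$ has radical exactly $\bar{\mathcal{F}}_n$, so descends to a nondegenerate symmetric form on $L$, that is, an isomorphism $L \otimes L \simeq \mathcal{O}_{\Fl_B}$. Hence $2 c_1(L) = 0$. Since $\Fl_B = \SO(2n+1)/B_B$ admits a Bruhat decomposition by even-dimensional cells, its integral cohomology is torsion-free, which forces $c_1(L) = 0$ and thus $c(L)=1$. Combining, $c(\bar{\mathcal{F}}_n)\,c(\bar{\mathcal{F}}_n^*) = c(\bar{\mathcal{F}}_n^\perp)\,c(\bar{\mathcal{F}}_n^*) = 1$, and back-substituting into the telescoping identity finishes the proof.
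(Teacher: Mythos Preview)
Your proof is correct and follows the same overall route as the paper: telescope to reduce to $c(\bar{\mathcal{F}}_n)\,c(\bar{\mathcal{F}}_n^*)=1$, which in turn reduces to $c_1(L)=0$ for the line bundle $L=\bar{\mathcal{F}}_n^\perp/\bar{\mathcal{F}}_n$. The only genuine difference is how you dispatch this last step. The paper observes that $L$ (which it writes as $\bar{\mathcal{F}}_{n+1}/\bar{\mathcal{F}}_n$) is actually the trivial line bundle: pulled back to $\SO(2n+1)/T_B$ it has the nowhere-vanishing section given by $e_{n+1}$, since $T_B$ fixes $e_{n+1}$ (equivalently, the associated $B_B$-character is trivial). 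Your argument instead uses that the symmetric form descends to a nondegenerate form on $L$, giving $L^{\otimes 2}\simeq\mathcal{O}$ and hence $2c_1(L)=0$, and then invokes torsion-freeness of $H^*(\Fl_B,\mathbb{Z})$. Both are valid; the paper's argument is more explicit and yields the slightly stronger conclusion that $L$ itself is trivial, while yours is coordinate-free and would apply to any family of maximal isotropic subspaces in an odd-rank orthogonal bundle over a base with torsion-free $H^2$.
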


\begin{proof} The  lemma follows by the same proof as that of the corresponding Lemma \ref{newlwmma2} once we observe that 
$$c(\bar{\mathcal{F}}_n)= c(\bar{\mathcal{F}}_{n+1}),$$
which follows from the fact that $\bar{\mathcal{F}}_{n+1}/\bar{\mathcal{F}}_{n}$ pulled back to $\SO(2n+1)/T_B$ admits a nowhere vanishing section 
since the vector $e_{n+1}$ is held fixed by $T_B$. 
\end{proof}
\begin{remark} \label{remark2}Even though we do not need, the map $\xi^{P^B_{n-k}}: \Rep^{\mathbb{C}}_{\poly} (L^B_{n-k}) \to H^* (\OG (n-k, 2n+1), \mathbb{C})$ of Theorem \ref{thmmain} under the decomposition  \eqref{eqn302} takes
for $1\leq i \leq n-k$, 
  $$
  e_i \left(  \bar{t}_1, \ldots , \bar{t}_{n-k} \right) \mapsto c_i (S_B) =  \epsilon_{s_{n-k-i+1} \cdots s_{n-k}}, \,\,\,\text{if $k>0$,}
  $$
 $$
  e_i \left(  \bar{t}_1, \ldots , \bar{t}_{n-k} \right) \mapsto c_i (S_B) =  2\epsilon_{s_{n-k-i+1} \cdots s_{n-k}}, \,\,\,\text{if $k=0$.}
  $$
\end{remark}

\begin{definition}\label{defi14'} [Inverse Limit]
  Analogous to Definition \ref{defi14}, for any $k\geq 0$,  define the {\it stable cohomology ring} [BKT2, \S 3.2] as
  $$
  \mathbb{H}^* (\OG_k, \mathbb{Z}) = \varprojlim H^* (\OG(n-k, 2n+1), \mathbb{Z})
  $$
  as the inverse limit (in the category of graded rings) of the inverse system
  \begin{equation*}
  \cdots \leftarrow H^* (\OG (n-k, 2n+1), \mathbb{Z})\xleftarrow{\bar{\pi}_n^*} H^* (\OG (n-k+1, 2n+3), \mathbb{Z}) \leftarrow \cdots ,
  \end{equation*}
where $\bar{\pi}_n: \OG(n-k,  2n+1) \hookrightarrow \OG(n-k+1,  2n+3)$ is given by $V\mapsto \bar{T}_n(V)\oplus \mathbb{C} e_{n+1}$ and $\bar{T}_n:\C^{2n+1} \to \C^{2n+3}$ is the linear embedding taking $e_i \mapsto e_i$ for $1\leq i \leq n$, taking  $e_{n+1} \mapsto e_{n+2}$ and taking $e_i \mapsto e_{i+2}$ for $n+2\leq i \leq 2n+1$.

  This ring has an additive basis consisting of Schubert classes $\tau_\lambda$ for each $k-$strict partition $\lambda$. The natural ring homorphism $\bar{\varphi}_{k, n}:\mathbb{H}^* (\OG_k, \mathbb{Z}) \to H^*(\OG(n-k, 2n+1), \mathbb{Z})$ takes $\tau_\lambda$ to $\tau_{\lambda}$ whenever $\lambda$ fits in a $(n-k) \times (n+k)$ rectangle and to zero otherwise. In particular, $\bar{\varphi}_{k, n}$ is surjective. From the definition of the Chern classes $c_j=c_j^n(Q_B)$, it is easy to see that under the restriction map  $\bar{\pi}_n^*: H^* (\OG (n-k+1, 2n+3), \mathbb{Z}) \to H^* (\OG (n-k, 2n+1), \mathbb{Z}), c_j^{n+1}\mapsto c_j^n$ for $1\leq j\leq n+k$ and $c_{n+k+1}^{n+1} \mapsto 0$. 

From the presentation of the ring $H^* (\OG(n-k, 2n+1), \mathbb{Z})$ (Theorem \ref{chap1-thm1.3'}), $\mathbb{H}^*(\OG_k, \mathbb{Z})$ is isomorphic with the polynomial ring $\mathbb{Z}[\tau_1, \tau_2, \ldots]$ modulo the relations:
  \begin{equation}\label{eqn305}
  (\bar{S}^s) \,\,(s>k):\,\,\,\,\,\,\qquad\qquad\qquad \tau_s^2 + \sum_{i=1}^s (-1)^i \delta_{s-i}\tau_{s+i} \tau_{s-i}=0 .
  \end{equation}

  Take $k\geq 1$. Recall from Proposition \ref{prop5'} that  
$$\Rep^\mathbb{C}_{\poly} (\SO (2k+1)) \simeq  \mathbb{C}_{\sym} \left[(\bar{h}_1)^2 , \dots , (\bar{h}_k)^2 \right].$$
Define a ring homomorphism (for any $1\leq k\leq n$)
$$\bar{\iota}^n_k: \Rep^\mathbb{C}_{\poly} (\SO (2k+1)) \to \Rep^\mathbb{C}_{\poly} (L_{n-k}^B)$$
by taking $f(\bar{\bf h}) \mapsto  1 \otimes f (\bar{\bf t})$, where $\bar{\bf h}:= (\bar{h}_1, \dots , \bar{h}_k)$, 
$\bar{\bf t}:= (\bar{t}_{n-k+1}, \dots , \bar{t}_n)$ and  $f(\bar{\bf t})$ is the same polynomial written in the $\bar{\bf t}$-variables uner the transformation $\bar{h}_p \mapsto \bar{t}_{n-k+p}$. 
This gives rise to the map $\bar{\xi}_{n, k}: = \xi^{P_{n-k}^B} \circ \bar{\iota}_k^n : \Rep^{\mathbb{C}}_{\poly} (\SO (2k+1)) \to H^* (\OG (n-k, 2n+1), \mathbb{C})$. Consider the following diagram, which is commutative because of Proposition  \ref{chap1-thm1.5'}.
  \[
  \xymatrix{
     & \\
    \Rep^{\mathbb{C}}_{\poly}(\SO(2k+1))\ar[r]^-{\bar{\xi}_{n,k}}\ar[dr]^{\bar{\xi}_{n+1,k}} & H^{*}(\OG(n-k,2n+1),\mathbb{C})\ar[u]^{\bar{\pi}^{*}_{n-1}}\\
    & H^{*}(\OG(n-k+1,2n+3),\mathbb{C}) \ar[u]^{\bar{\pi}^{*}_{n}}\\
    & \ar[u]^{\bar{\pi}^{*}_{n+1}}
  }
  \]
The compatible ring homomorphisms   $\bar{\xi}_{n,k}: \Rep^{\mathbb{C}}_{\poly}(\SO(2k+1))\to  H^{*}(\OG(n-k,2n+1),\mathbb{C})$ combine to give  a ring homomorphism
$$\bar{\xi}_k:  \Rep^{\mathbb{C}}_{\poly}(\SO(2k+1))\to  \mathbb{H}^{*}(\OG_k,\mathbb{C}).$$ 
\end{definition}

The following theorem is our second main result of the paper, which is analogous to Theorem \ref{thmsp}. 

\begin{theorem} \label{thmsp'}  Let $k\geq 1$ be an integer. 
 The above ring homomorphism  $\bar{\xi}_k: \Rep^{\mathbb{C}}_{\poly} (\SO (2k+1)) \to \mathbb{H}^* (\OG_k, \C)$ takes the generators
  \begin{equation} \label{eqn103'}
  e_i \left((\bar{h}_1)^2, \dots , (\bar{h}_k)^2\right) \mapsto c_i^2 + 2 \sum_{j=1}^i (-1)^j c_{i+ j} c_{i-j},\,\,\,\text{for any $1\leq i \leq k$},
  \end{equation}
where $c_i:=c_i(\mathcal{Q}_B)$. 

  In particular,  $\bar{\xi}_k$  is injective.
\end{theorem}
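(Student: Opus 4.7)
The plan is to mirror the mod-$2$ reduction argument used in the proof of Theorem~\ref{thmsp}. First, the explicit formula (\ref{eqn103'}) follows by applying Proposition~\ref{chap1-thm1.5'} at each finite level $n$ and then passing to the inverse limit; the compatibility with the transition maps $\bar{\pi}_n^*$ is built into Definition~\ref{defi14'}, so this is the same formal step already carried out in the symplectic case. The substance of the proof is therefore the injectivity assertion.

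For injectivity, I would introduce the integral subring $\Rep^{\mathbb{Z}}_{\poly}(\SO(2k+1)) := \mathbb{Z}[e_1(\bar{\mathbf{h}}^2),\ldots, e_k(\bar{\mathbf{h}}^2)]$. Formula (\ref{eqn103'}) together with the identities (\ref{eqn303}) between the Chern classes $c_i$ and the Schubert classes $\tau_i$ shows that $\bar{\xi}_k$ restricts to a graded $\mathbb{Z}$-algebra homomorphism $\bar{\xi}_k^{\mathbb{Z}} : \Rep^{\mathbb{Z}}_{\poly}(\SO(2k+1)) \to \mathbb{H}^*(\OG_k,\mathbb{Z})$. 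Since $\mathbb{H}^*(\OG_k,\mathbb{Z})$ has the Schubert basis $\{\tau_\lambda\}$ and is therefore free abelian of finite rank in each degree, the image of $\bar{\xi}_k^{\mathbb{Z}}$ is itself free abelian and the kernel $K$ splits off as a direct summand in each degree. By the same formal argument as in Theorem~\ref{thmsp}, the injectivity of $\bar{\xi}_k$ will then follow from the injectivity of $\mathbb{Z}/(2)\otimes_{\mathbb{Z}}\bar{\xi}_k^{\mathbb{Z}}$ combined with flatness of $\mathbb{C}$ over $\mathbb{Z}$.

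The key step is therefore the mod-$2$ injectivity. Under the identification $\mathbb{Z}/(2) \otimes \Rep^{\mathbb{Z}}_{\poly}(\SO(2k+1)) \simeq \mathbb{Z}/(2)[e_1,\ldots,e_k]$, formula (\ref{eqn103'}) together with $c_i = \tau_i$ for $i\le k$ shows that the induced map sends $e_i \mapsto \tau_i^2$. To analyze the target I would reduce the stable relations (\ref{eqn305}) modulo $2$: since $\delta_p \equiv 1 \pmod 2$ for $p\le k$ and $\delta_p \equiv 0$ for $p>k$, each relation $(\bar{S}^s)$ with $s>k$ collapses to the triangular identity
\begin{equation*}
\tau_{2s} \equiv \tau_s^2 + \sum_{j=1}^k \tau_j\,\tau_{2s-j} \pmod 2,
\end{equation*}
expressing $\tau_{2s}$ in terms of $\tau$'s of strictly smaller index. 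Iteratively eliminating $\tau_{2k+2}, \tau_{2k+4}, \tau_{2k+6},\ldots$, one identifies $\mathbb{Z}/(2)\otimes \mathbb{H}^*(\OG_k,\mathbb{Z})$ with a polynomial ring over $\mathbb{Z}/(2)$ on the generators $\{\tau_i : i \text{ is not of the form } 2s \text{ with } s>k\}$. In particular $\tau_1,\ldots,\tau_k$ are algebraically independent in this quotient, so $e_i \mapsto \tau_i^2$ is injective, as required.

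The main obstacle, in contrast to the symplectic case, is that the relations (\ref{eqn305}) do not simplify mod $2$ to the clean vanishing $c_s^2 \equiv 0$ of (\ref{eqn205}): the mixed coefficients $\delta_p$ leave surviving cross-terms, and what rescues the argument is the triangular structure (with leading variable $\tau_{2s}$) that is visible only after the mod-$2$ reduction. Once this polynomial description of $\mathbb{Z}/(2)\otimes \mathbb{H}^*(\OG_k,\mathbb{Z})$ is in place, the remainder of the proof proceeds verbatim along the lines of Theorem~\ref{thmsp}.
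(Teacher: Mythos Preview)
Your argument is correct but proceeds along a genuinely different route from the paper's. The paper does not analyse the mod-$2$ reduction of $\mathbb{H}^*(\OG_k,\mathbb{Z})$ in the $\tau$-variables at all; instead it passes to the subring $\bar{\mathbb{H}}^*(\OG_k,\mathbb{Z})\subset \mathbb{H}^*(\OG_k,\mathbb{Z})$ generated by the Chern classes $c_i$ and observes (via \eqref{eqn303}) that each relation $(\bar{S}^s)$ rewrites in the $c$-variables as the \emph{identical} symplectic relation $(\hat{S}^s):\ c_s^2 + 2\sum_i(-1)^i c_{s+i}c_{s-i}=0$. Thus $\mathbb{Z}/(2)\otimes\bar{\mathbb{H}}^*(\OG_k,\mathbb{Z})$ coincides with the ring appearing in \eqref{eqn107}, and the mod-$2$ map $e_i\mapsto c_i^2$ is injective by the same one-line check as in Theorem~\ref{thmsp}. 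Your approach instead stays in the full ring, reduces $(\bar{S}^s)$ modulo $2$ in the $\tau$-variables, and uses the triangular shape (leading term $\tau_{2s}$) to exhibit $\mathbb{Z}/(2)\otimes\mathbb{H}^*(\OG_k,\mathbb{Z})$ as a polynomial ring on an explicit set of generators containing $\tau_1,\ldots,\tau_k$; Frobenius then gives injectivity of $e_i\mapsto \tau_i^2$. The paper's trick buys a verbatim reduction to the symplectic case at the cost of introducing an auxiliary subring (and tacitly identifying its presentation); your route is more self-contained and yields a finer mod-$2$ description of the whole stable ring, at the price of the elimination argument.
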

\begin{proof} The first part follows  from Proposition \ref{chap1-thm1.5'}. 

We next prove the injectivity of $\bar{\xi}_k$:

By Proposition \ref{prop5'}, $ \Rep^{\mathbb{C}}_{\poly}(\SO (2k+1)) $ is a polynomial ring over $\mathbb{C}$ generated by $\{e_1(\bar{\bf h}), \dots, e_k(\bar{\bf h})\}$. Let $ \Rep^{\mathbb{Z}}_{\poly}(\SO (2k+1)) $ be the polynomial subring over $\mathbb{Z}$ generated by $\{e_1(\bar{\bf h}), \dots, e_k(\bar{\bf h})\}$. 

Let $ \bar{\mathbb{H}}^* (\OG_k, \mathbb{Z})\subset  \mathbb{H}^* (\OG_k, \mathbb{Z})$ be the subring generated by $\{c_i\}_{i\geq 1}$. Then, by the identity \eqref{eqn303},
\begin{equation}\label{eqn501} 
 \C\otimes_{\mathbb{Z}}\,\bar{\mathbb{H}}^* (\OG_k, \mathbb{Z})= \C\otimes_{\mathbb{Z}}\,\mathbb{H}^* (\OG_k, \mathbb{Z})=  \mathbb{H}^* (\OG_k, \mathbb{C}).
 \end{equation}
 Then, by the equation \eqref{eqn103'}, 
$$\bar{\xi}_k (\Rep^{\mathbb{Z}}_{\poly}(\SO (2k+1)))\subset  \bar{\mathbb{H}}^* (\OG_k, \mathbb{Z}).$$
Thus, on restriction, we get the ring homomorphism
$$\bar{\xi}_k^{\mathbb{Z}}: \Rep^{\mathbb{Z}}_{\poly}(\SO (2k+1)) \to  \bar{\mathbb{H}}^* (\OG_k, \mathbb{Z}).$$
Observe further that $\bar{\xi}_k^{\mathbb{Z}}$ is a homomorphism of graded rings if we assign degree $4i$ to each $e_i$ (and the standard cohomological degree to $ \bar{\mathbb{H}}^* (\OG_k, \mathbb{Z})$). 
Let $\bar{K}$ be the Kernel of $\bar{\xi}_k^{\mathbb{Z}}$. Since $ \mathbb{H}^* (\OG_k, \mathbb{Z})$ is a free $\mathbb{Z}$-module of finite rank in each degree
and hence so is $\bar{\mathbb{H}}^* (\OG_k, \mathbb{Z})$, the induced homomorphism
\begin{equation}\label{eqn104} \mathbb{Z}/(2)\otimes_\mathbb{Z} \,\bar{K} \to \mathbb{Z}/(2)\otimes_\mathbb{Z}\, \Rep^{\mathbb{Z}}_{\poly}(\SO (2k+1))
\,\,\,\text{is injective}. 
\end{equation}
We next observe that the induced homomorphism 
\begin{equation}\label{eqn105}  \mathbb{Z}/(2)\otimes_\mathbb{Z}\, \Rep^{\mathbb{Z}}_{\poly}(\SO (2k+1))\to   \mathbb{Z}/(2)\otimes_\mathbb{Z}\,\bar{\mathbb{H}}^*(\OG_k, \mathbb{Z})
\,\,\,\text{is injective}. 
\end{equation}
To prove this, observe that 
\begin{equation}\label{eqn106'}  \mathbb{Z}/(2)\otimes_\mathbb{Z}\, \Rep^{\mathbb{Z}}_{\poly}(\SO (2k+1))\simeq \mathbb{Z}/(2)[e_1, \dots, e_k].
\end{equation}
Moreover, by the defining relations $(\bar{S}^s) \,(s>k)$ of $ \mathbb{H}^* (\OG_k, \mathbb{Z})$ as in equation \eqref{eqn305} together with the identity \eqref{eqn303},
we can rewrite the equation \eqref{eqn305} as:
$$ (\hat{S}^s) \,\,(s>k):\,\,\,\,\,\,\qquad\qquad\qquad c_s^2 + 2\sum_{i=1}^s (-1)^i c_{s+i} c_{s-i}=0.$$
Thus, 
\begin{equation}\label{eqn107'}  \mathbb{Z}/(2)\otimes_\mathbb{Z}\, \bar{\mathbb{H}}^* (\OG_k, \mathbb{Z})
\simeq \mathbb{Z}/(2)[c_1, \dots, c_k] \otimes  \frac{ \mathbb{Z}/(2)[c_{k+1}, c_{k+2}\dots ] }{\langle c_{k+1}^2, c_{k+2}^2\dots \rangle}.
\end{equation}
Moreover, under the above identifications \eqref{eqn106'} and \eqref{eqn107'}, by the first part of the theorem,  the ring homomorphism  $\bar{\xi}_k^{\mathbb{Z}}$ modulo $2$ is given by 
  $$e_i \mapsto c_i^2,\,\,\,\text{for any $1\leq i\leq k$}.$$
In particular, it is injective. From this we obtain that 
$$ \mathbb{Z}/(2)\otimes_\mathbb{Z} \,\bar{K} =0.$$
But, since $\bar{K}$ is a finitely generated torsionfree $\mathbb{Z}$-module in each graded degree (thus free) we get that
$$\bar{K}=0.$$
Since $\mathbb{C}$ is a torsionfree $\mathbb{Z}$-module, by the equation \eqref{eqn501}, this clearly gives the injectivity of $\bar{\xi}_k$  proving the theorem.
\end{proof}

\begin{remark} \label{remark2}The  ring homomorphism  $\bar{\xi}_k: \Rep^{\mathbb{C}}_{\poly} (\SO(2k+1)) \to \mathbb{H}^* (\OG_k, \C)$  of the above Theorem \ref{thmsp'} is {\it not} surjective, as can be easily seen since the domain is a finitely generated $\mathbb{C}$-algebra (by  Proposition \ref{prop5'}) whereas the range is not (for otherwise for each $n$, $H^{*}(\OG(n-k,2n+1),\mathbb{C})$ would be generated by a fixed finite number  of generators independent of $n$). 
\end{remark}

\bibliographystyle{plain}
\def\noopsort#1{}

\vskip5ex

\noindent
Address: Shrawan Kumar,
Department of Mathematics,
University of North Carolina,
Chapel Hill, NC  27599--3250. 
\noindent
email: shrawan@email.unc.edu
\vskip1ex

Sean Rogers,
NSA.

\end{document}